\newtheorem{theorem}{Theorem}[section]
\newtheorem{theorem*}{Theorem}
\newtheorem{lemma}[theorem]{Lemma}
\newtheorem{proposition}[theorem]{Proposition}
\newtheorem{corollary}[theorem]{Corollary}
\theoremstyle{definition} 
\newtheorem{definition}[theorem]{Definition}
\newtheorem*{definition*}{Definition}
\newtheorem{remark}[theorem]{Remark}
\newtheorem*{remark*}{Remark}
\newcommand{\Area}{\operatorname{Area}}
\begin{document}

\begin{abstract}
We study regularity properties of solutions to the Dirichlet problem for the complex Homogeneous Monge-Amp\`ere equation.   We show that for certain boundary data on $\mathbb P^1$ the solution $\Phi$ to this Dirichlet problem is connected via a Legendre transform to an associated flow in the complex plane called the Hele-Shaw flow. Using this we determine precisely the harmonic discs associated to $\Phi$.  We then give examples for which these discs are not dense in the product, and also prove that this situation persists after small perturbations of the boundary data.
\end{abstract}

\title[Harmonic Discs of the HMAE]{Harmonic Discs of Solutions to the Complex Homogeneous Monge-Amp\`ere Equation}

\thanks{During this work JR was supported by an EPSRC Career Acceleration Fellowship (EP/J002062/1). DWN has received funding from the People Programme (Marie Curie Actions) of the European Union's Seventh Framework Programme (FP7/2007-2013) under REA grant agreement no 329070}
 
\author{Julius  Ross and David Witt Nystr\"om}
\maketitle
\vspace{-5mm}
Let $(X,\omega)$ be a compact K\"ahler manifold of dimension $n$ and $\mathbb D\subset \mathbb C$ be the open unit disc.  Consider boundary data consisting of a family $\omega+dd^c\phi(\cdot,\tau)$ of K\"ahler forms where $\phi(\cdot,\tau)$ is a smooth function on $X$ for $\tau\in \partial \mathbb D$.  The Dirichlet problem for the complex Homogeneous Monge Amp\`ere equation (HMAE) asks for a function $\Phi$ on $X\times \overline{\mathbb D}$ such that
\begin{align*}
\Phi(\cdot,\tau) = \phi(\cdot, \tau) &\text{ for }\tau\in \partial \mathbb D,\\
\pi^*\omega + dd^c\Phi&\ge 0, \\
(\pi^* \omega + dd^c\Phi)^{n+1}&=0.
\end{align*}

We say $\Phi$ is a \emph{regular solution} if it is smooth and $\omega + dd^c\Phi(\cdot,\tau)$ is a K\"ahler form for all $\tau\in \overline{\mathbb D}$. By an example of Donaldson \cite{Donaldson} we know there exist smooth boundary data for which there does not exist a regular solution. Nevertheless, the equation always has a unique weak solution, which by the work of Chen \cite{Chen} with complements by B\l ocki \cite{Blocki} we know is at least ``almost'' $C^{1,1}$ (so in particular $C^{1,\alpha}$ for any $\alpha<1$). See \cite{Guedjbook} for a recent survey. 

A more subtle aspect of the regularity of solutions to the HMAE is the question of existence and distribution of harmonic discs.
    
\begin{definition*}
   Let $g\colon \mathbb D\to X$ be holomorphic.  We say that the graph of $g$ is a \emph{harmonic disc} (with respect to $\Phi$) if $\Phi$ is $\pi^*\omega$-harmonic (i.e. $\pi^*\omega+dd^c\Phi$ vanishes) along this graph.
\end{definition*}
 As is well known, a regular solution to the HMAE yields a complex foliation of $X\times \overline{\mathbb D}$ whose leaves restrict to harmonic discs in $X\times \mathbb D$.    Even when the solution is not regular, the existence of such harmonic discs is important; for instance along such a harmonic disc the density of the varying measure $\omega_{\phi(\cdot,\tau)}^n$ is essentially log-subharmonic (see \cite{BedfordBurns} \cite{ChenTian} \cite[Sec 3.2]{BoBerman}).

 It was hoped that any weak solution would enjoy a weaker form of regularity, so that a dense open subset of $X\times \mathbb D$ would be foliated by harmonic disc, but as we will see this is not always the case. \medskip

This paper describes a correspondence between on the one hand the HMAE when $X=\mathbb P^1$ and the boundary data has a certain kind of symmetry and on the other hand the so-called ``Hele-Shaw'' flow in the plane. As a result we see that  the set of harmonic discs is determined by the topology of the flow.

To state precise results, let $\omega_{FS}$ denote the Fubini-Study form on $\mathbb{P}^1$ and $\phi$ be a smooth K\"ahler potential, i.e.\ a smooth function on $\mathbb{P}^1$ such that $\omega_{FS}+dd^c\phi$ is K\"ahler. Let $\rho$ denote the usual $\mathbb{C}^{\times}$-action on $\mathbb{P}^1$ which acts by multiplication on $\mathbb{C}\subset \mathbb{P}^1$.  We consider the function $\phi(z,\tau):=\phi(\rho(\tau)z)$ as boundary data over $\mathbb P^1\times \partial \mathbb D$, so for each $\tau\in \partial\mathbb D$ we have a K\"ahler form $\omega_{FS} + dd^c \phi(\cdot,\tau)$.  We show that the solution $\Phi$ to the Homogeneous Monge-Amp\`ere equation (see \ref{def:weakhmae} for the definition) with this boundary data is intimately connected to the Hele-Shaw flow
$$\Omega_t: = \{ z : \psi_{t}(z) < \phi(z)\}$$ 
where
$$\psi_{t} := \sup\{\psi: \psi \text{ is usc and }\psi\le \phi \text{ and } \omega_{FS} + dd^c\psi\geq 0 \text{ and } \nu_{0}(\psi)\ge t\}.$$
By this we mean the supremum is over all upper semicontinuous (usc) functions from $\mathbb P^1$ to  $\mathbb R\cup \{-\infty\}$ with these properties, and $\nu_0(\psi)$ denotes the order of the logarithmic singularity (Lelong number) of $\psi$ at $0\in \mathbb C\subset \mathbb P^1$. In fact we show that the solution $\Phi$ and the family $\psi_t$ are related via a Legendre transform. \medskip

 Using this we prove the following:
\begin{theorem*}
 Let $\Phi$ be the solution to the HMAE with boundary data $\phi$ and $g\colon \mathbb D\to \mathbb P^1$ be holomorphic.   Then the graph of $g$ is a harmonic disc of $\Phi$ if and only if either
  \begin{enumerate}
  \item $g\equiv 0$, or
  \item $g(\tau) = \tau^{-1} z$ for some fixed $z\in \mathbb P^1\setminus \Omega_1$, or
  \item $\tau \mapsto \tau g(\tau)$ is a Riemann mapping for a simply connected Hele-Shaw domain $\Omega_t$ that maps $0\in \mathbb D$ to $0\in \Omega_t$.
  \end{enumerate}
\end{theorem*}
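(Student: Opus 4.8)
The plan is to exploit the built-in circle symmetry of the boundary data together with the Legendre-transform description of $\Phi$ in terms of the family $\psi_t$, and then to read off the harmonic discs from the geometry of the Hele-Shaw flow $t\mapsto\Omega_t$.

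First I would record the symmetry. Since for $\tau\in\partial\mathbb D$ the boundary values satisfy $\phi(z,\tau)=\phi(\rho(\tau)z)$, the combined action $(z,\tau)\mapsto(\rho(e^{-i\theta})z,e^{i\theta}\tau)$ preserves the boundary data: indeed $\rho(e^{i\theta}\tau)\rho(e^{-i\theta})=\rho(\tau)$, and $\rho(e^{-i\theta})$ is an $\omega_{FS}$-isometry while $\tau\mapsto e^{i\theta}\tau$ is an automorphism of $\mathbb D$, so the whole equation is preserved. By uniqueness of the weak solution this action therefore preserves $\Phi$, and hence permutes harmonic discs, sending the graph of $g$ to the graph of $\tau\mapsto\rho(e^{-i\theta})g(e^{-i\theta}\tau)$. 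Passing to the coordinate $h(\tau):=\rho(\tau)g(\tau)=\tau g(\tau)$ this becomes the plain rotation $h\mapsto h(e^{-i\theta}\,\cdot\,)$; this is why $\tau\mapsto\tau g(\tau)$, rather than $g$ itself, is the natural object, and it already explains the shape of the answer, since $h\equiv0$ is case (1), the constant $h\equiv z$ is case (2), and a Riemann map (whose rotations are again Riemann maps of the same domain) is case (3).

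Next I would use the established fact that $\Phi$ is obtained from the $\psi_t$ by a Legendre transform, writing $\Phi(z,\tau)=\sup_t\hat\psi_t(z,\tau)$ as a supremum of $\pi^*\omega_{FS}$-subsolutions, where each $\hat\psi_t$ is built from $\psi_t(\rho(\tau)z)$ together with a $t\log|\tau|$ term and a Fubini--Study correction coming from pulling back $\omega_{FS}$ along $(z,\tau)\mapsto\rho(\tau)z$. The key translation is then: the graph of $g$ is a harmonic disc if and only if along it the supremum is attained by a single $\hat\psi_{t_0}$ which is moreover $\pi^*\omega_{FS}$-harmonic on the disc. In other words, every harmonic disc \emph{lives at one Hele-Shaw time} $t_0$, which reduces the problem to understanding, for each fixed $t_0$, the holomorphic discs on which $\psi_{t_0}$ is harmonic. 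On $\Omega_{t_0}\setminus\{0\}$ the extremal function satisfies $\omega_{FS}+dd^c\psi_{t_0}=0$ with a logarithmic pole of order $t_0$ at the origin, while $\psi_{t_0}=\phi$ on the contact set $\mathbb P^1\setminus\Omega_{t_0}$; so when $\Omega_{t_0}$ is simply connected its Riemann map $h\colon\mathbb D\to\Omega_{t_0}$ with $h(0)=0$ pulls the pole back to $t_0\log|\tau|$ and makes $\hat\psi_{t_0}$ harmonic along the graph of $g=h/\tau$, giving case (3); the rotation symmetry shows these exhaust the discs at time $t_0$. The two degenerate cases sit at the ends of the flow: the injection point gives the central disc $g\equiv0$ (case (1)), and a point $z$ of the dry region $\mathbb P^1\setminus\Omega_1$, where $\psi_t(z)=\phi(z)$ for every $t$, makes the supremum flat in $t$ and forces the constant $h\equiv z$, i.e.\ $g(\tau)=\tau^{-1}z$ (case (2)). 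Each of (1)--(3) I would then confirm to be a genuine harmonic disc by exhibiting the relevant $\hat\psi_{t_0}$ and checking harmonicity directly.

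The main obstacle is the claim that every harmonic disc is attained by a single $\hat\psi_{t_0}$ and that $t_0$ cannot vary along the disc: this requires ruling out discs that interpolate between different times of the flow, which I expect to handle using the strict convexity of the Legendre transform together with the almost-$C^{1,1}$ regularity of $\Phi$. Secondary difficulties are the regularity of $\psi_{t_0}$ up to $\partial\Omega_{t_0}$ needed to match the Riemann map exactly, the treatment of non-smooth or non-simply-connected $\Omega_{t_0}$ (where case (3) simply produces no disc, which is precisely the source of the non-density phenomenon that the paper is after), and the careful analysis of the two endpoints $\tau\to0$ and $z\in\mathbb P^1\setminus\Omega_1$. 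Throughout, the circle symmetry is what keeps these analyses one-dimensional and tractable.
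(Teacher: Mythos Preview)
Your overall architecture matches the paper's: pass to $h(\tau)=\tau g(\tau)$, use the Legendre description $\tilde\Phi(z,\tau)=\sup_t\{\psi_t(z)+(1-t)\ln|\tau|^2\}$, and argue that a harmonic disc lives at a single Hele--Shaw time $t_0$. The circle symmetry you record is correct and explains the shape of the answer, but it is not load-bearing in the argument, and two of your steps have genuine gaps.

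\textbf{Constancy of $t_0$.} You propose to rule out ``discs that interpolate between different times'' via strict convexity of the Legendre transform. But $t\mapsto\psi_t(z)$ is only concave, not strictly concave (it is constant in $t$ on the contact set, for instance), so strict convexity of the conjugate fails exactly where you need it. The paper's mechanism is a one-line maximum principle: choose any $t_0$ for which the sup is attained at a single point $(f(\tau_0),\tau_0)$; then $\psi_{t_0}(f(\tau))+(1-t_0)\ln|\tau|^2-\tilde\Phi(f(\tau),\tau)$ is subharmonic on the disc (because $\tilde\Phi$ is $\omega_{FS}$-harmonic along it), nonpositive everywhere, and vanishes at $\tau_0$, hence is identically zero. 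This both fixes $t_0$ and yields the working identity $\tilde\Phi(f(\tau),\tau)=\psi_{t_0}(f(\tau))+(1-t_0)\ln|\tau|^2$.

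\textbf{Why the disc must be a Riemann map.} Your claim that ``the rotation symmetry shows these exhaust the discs at time $t_0$'' is not sufficient. Rotation invariance only says the family of harmonic discs at time $t_0$ is closed under $\tau\mapsto e^{i\theta}\tau$; it does not force an individual $f\colon\mathbb D\to\Omega_{t_0}$ to be a biholomorphism --- a priori $f$ could be a branched cover, or fail to be proper. The paper supplies the missing steps: (i) from the identity above and continuity of $\tilde\Phi$ up to $|\tau|=1$, one gets $\phi(f(\tau_i))-\psi_{t_0}(f(\tau_i))\to 0$ as $|\tau_i|\to1$, so $f$ is a \emph{proper} map into $\Omega_{t_0}$; (ii) a Lelong-number comparison at $\tau=0$, using $\tilde\Phi(z,\tau)\ge\phi(z)+\ln|\tau|^2$, shows that the zero of $f$ at $0$ has multiplicity exactly one; (iii) an elementary lemma that a proper holomorphic map between planar domains has constant preimage count then gives that $f$ is a bijection $\mathbb D\to\Omega_{t_0}$. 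The constant case $f\equiv z_0$ is handled separately (forcing $t_0=1$ and $z_0\in\Omega_1^c$, or $z_0=0$). Without the properness argument and the degree count, the ``only if'' direction of case (3) is not established.
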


\begin{remark*}
In \cite{RossNystromHeleShaw} the authors prove that $\Omega_t$ is simply connected for $0<t\ll 1,$ so there is always an infinite number of harmonic discs of the form (3).
\end{remark*}

The Hele-Shaw flow $\Omega_t$ has a physical interpretation as describing the expansion of a liquid in a medium with permeability inversely proportional to $\Delta(\phi+\ln (1+|z|^2))$.    Guided by this one can rather easily find potentials $\phi$ for which at some time $t$ the flow domain $\Omega_t$ becomes multiply connected as in Figure 1. 

\begin{figure}[htb]\label{fig:multiple}
\includegraphics[width=0.8\textwidth]{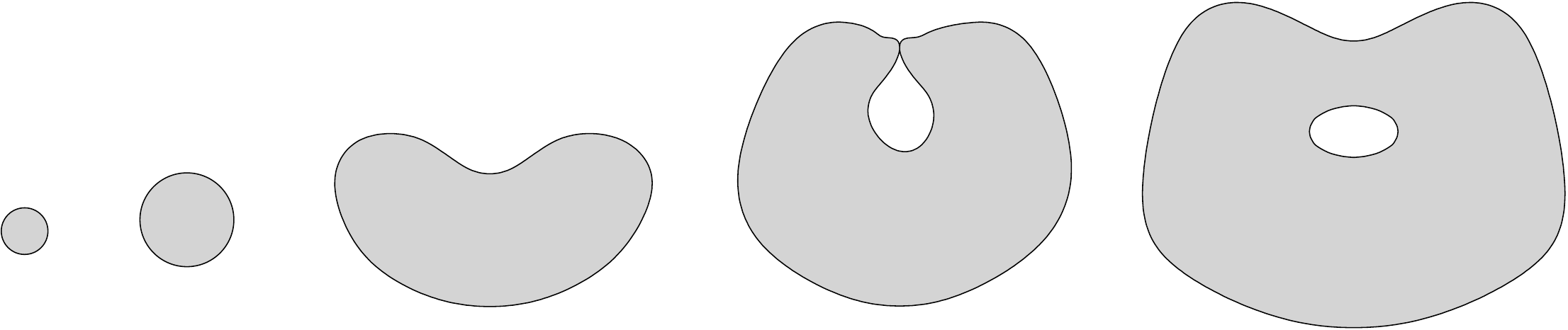}
\centering
\caption{The Hele-Shaw flow developing a multiply connected domain}  
\end{figure}

 This then translates into an obstruction to the presence of harmonic discs of the associated solution to the HMAE:

\begin{theorem*}
  There exist smooth boundary data $\phi(\cdot,\tau)$ for which the solution to the Dirichlet problem for the HMAE has the following property: there exists an open set $U$ in $\mathbb P^1\times \overline{\mathbb D}$ meeting $\mathbb P^1\times \partial \mathbb D$, such that no harmonic disc intersects $U$.
\end{theorem*}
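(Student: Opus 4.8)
The plan is to use the classification of harmonic discs from the preceding theorem, which reduces the task to two things: choosing boundary data whose Hele-Shaw flow changes topology, and exhibiting an open region near the boundary that none of the three allowed families of discs can reach. Throughout I would work in the coordinate $w:=\tau z$ on the locus $\{\tau\neq 0\}$ of $\mathbb P^1\times\overline{\mathbb D}$, in which the three families become transparent. A disc of type (1) is the set $\{w=0\}$; a disc of type (2) with parameter $z_0\in\mathbb P^1\setminus\Omega_1$ is the ``vertical'' disc $\{w=z_0\}$, since along it $w=\tau\cdot(\tau^{-1}z_0)=z_0$ is constant; and a disc of type (3) arising from a simply connected $\Omega_t$ is the graph of $h_t(\tau):=\tau g(\tau)$, a Riemann map $\mathbb D\to\Omega_t$ with $h_t(0)=0$. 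Because such a Riemann map is unique only up to a rotation of the source disc, the type (3) discs over a fixed simply connected time $t$ form a circle's worth of discs, and a point with coordinates $(w_0,\tau_0)$, $\tau_0\neq 0$, lies on one of them precisely when $w_0\in\Omega_t$ and $r_t(w_0)=|\tau_0|$, where $r_t(w_0):=|h_t^{-1}(w_0)|\in[0,1)$ is independent of the chosen Riemann map.

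I would then choose the potential $\phi$ so that the flow is simply connected on $[0,t_1]$, multiply connected exactly for $t\in(t_1,t_2)$ with $t_2<1$, and simply connected again on $[t_2,1]$, enclosing and then filling a single hole $H$ with $H\subset\Omega_{t_2}\subset\Omega_1$ and $0\notin H$ (that the origin never lies in a hole is automatic, the liquid being injected there). The existence of such $\phi$ is exactly the phenomenon depicted in Figure \ref{fig:multiple}: a bump in the permeability weight $\Delta(\phi+\ln(1+|z|^2))$ placed off the origin forces the liquid to flow around a region of low permeability and momentarily trap a hole. I then fix a point $w_0\in H$ whose arrival time $t(w_0):=\inf\{t:w_0\in\Omega_t\}$ lies in $(t_1,t_2)$.

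The heart of the argument is that no type (3) disc reaches $w_0$ with $|\tau_0|$ near $1$. Indeed $w_0\in\Omega_t$ iff $t\ge t(w_0)$, and every such time strictly below $t_2$ is multiply connected, so the only simply connected times whose type (3) disc can pass through $w_0$ are $t\ge t_2$. For these, monotonicity $\Omega_{t_2}\subset\Omega_t$ together with the Schwarz lemma applied to $h_t^{-1}\circ h_{t_2}\colon\mathbb D\to\mathbb D$ (which fixes $0$) gives $r_t(w_0)\le r_{t_2}(w_0)$, and $r_{t_2}(w_0)<1$ since $w_0$ is an interior point of $\Omega_{t_2}$. Hence every type (3) disc through $w_0$ has $|\tau_0|\le r_{t_2}(w_0)<1$; type (1) is excluded as $w_0\neq 0$ and type (2) as $w_0\in\Omega_1$. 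By continuity of $w\mapsto r_{t_2}(w)$ there is a ball $B\ni w_0$ with $\overline B\subset H$ and an $R<1$ with $r_{t_2}<R$ on $B$, and I would take $U:=\{(w/\tau,\tau):w\in B,\ R<|\tau|\le 1\}$, which is open in $\mathbb P^1\times\overline{\mathbb D}$ and meets $\mathbb P^1\times\partial\mathbb D$. For any type (3) disc from a time $t\ge t_2$, a value $\tau$ with $h_t(e^{i\theta}\tau)\in B$ maps into the interior of $\Omega_t$, so $|\tau|=r_t(h_t(e^{i\theta}\tau))\le\sup_B r_{t_2}<R$; thus no such disc meets $U$, and neither do the discs of types (1) and (2). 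Since the classification theorem lists all harmonic discs, $U$ is the desired region.

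The main obstacle is the input of the second step: rigorously producing a smooth $\phi$ whose Hele-Shaw flow provably develops and then closes a hole, and establishing enough regularity of the flow---continuity and monotonicity of $r_t(w)$ in $(t,w)$, well-definedness of the arrival time, and the fact that $\Omega_{t_2}$ is a genuine simply connected domain containing $w_0$ in its interior---to justify the conformal estimates above. I expect to quote the construction and the structural properties of the flow from \cite{RossNystromHeleShaw}, after which the conformal-geometry step (the Schwarz-lemma monotonicity of $r_t$) is routine.
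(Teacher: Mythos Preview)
Your Schwarz-lemma step is sound: once you have a simply connected $\Omega_{t_2}$ with $w_0$ in its interior and $w_0\notin\Omega_t$ for $t\le t_1$, the bound $r_t(w_0)\le r_{t_2}(w_0)<1$ for every simply connected $t\ge t_2$ does exactly what you want. The gap is in the input. You need a $\phi$ whose flow not only develops a hole but then \emph{closes} it before $t=1$, leaving a simply connected $\Omega_{t_2}$ that swallows the former hole. Proposition~\ref{prop:notsc} does not give this: in its construction the two witness points $p,q$ are explicitly \emph{outside} $\Omega_{t_2}$, so the holes persist past $t_2$, and nothing in the paper (or in \cite{RossNystromHeleShaw}, which treats short-time simple connectivity) shows they ever close. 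Your proposed citation will not deliver the scenario you need, so as written the proof is incomplete; you would have to produce a genuinely new construction of $\phi$.

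The paper avoids this entirely by exploiting an extra output of Theorem~\ref{thm:holdiscs} that you did not use: the continuous Hamiltonian $H(z,\tau)=\partial_s\tilde\Phi(z,e^{-s/2})$ is constant along each transformed disc $\{(\tau g(\tau),\tau)\}$, equal to $-1$, $0$, or $t-1$ in cases (1), (2), (3). Hence the open set $U=\{(z,\tau):t_1-1<H(\tau z,\tau)<t_2-1,\ \tau\neq0\}$ meets no harmonic disc as soon as \emph{every} $t\in(t_1,t_2)$ is a multiply connected time, which is precisely what Proposition~\ref{prop:notsc} supplies; that $U$ meets $\mathbb P^1\times\partial\mathbb D$ follows from Proposition~\ref{prop:Hu_t} and the intermediate value theorem. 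The Hamiltonian plays the role your $r_t$ was meant to play, but it is defined for all $t$ and requires no reference Riemann map, so no ``closing of the hole'' is needed.
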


Next we address the question whether generic boundary data give rise to  solutions with a weak form of regularity. The following theorem answers that question negatively.

\begin{theorem*}
 There exist smooth boundary data $\phi(\cdot,\tau)$ for which the following is true: there exist a nonempty open set $U'$ in $\mathbb{P}^1\times \mathbb{D}$ and an $\epsilon>0$ such that if $\phi'(z,\tau)$ is any smooth boundary data with $$\|\phi'-\phi\|_{C^2(\mathbb{P}^1\times \partial \mathbb{D})}<\epsilon$$ and $\Phi'$ is the associated solution to the HMAE then no harmonic disc (associated to $\Phi'$) passes through $U'$.
\end{theorem*}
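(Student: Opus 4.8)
The plan is to combine the classification of harmonic discs with a compactness argument. The essential point is that a perturbation $\phi'$ of $\phi$ need not retain the $\mathbb{C}^{\times}$-symmetry, so the classification theorem governs only the unperturbed solution $\Phi$, not the perturbed solutions $\Phi'$. I therefore make no attempt to understand the harmonic discs of the $\Phi'$ directly; instead I show that if, for boundary data $\phi'$ arbitrarily close to $\phi$, a harmonic disc of $\Phi'$ passed through a fixed interior region, then along a sequence $\phi'\to\phi$ these discs would converge to a harmonic disc of $\Phi$ meeting that region, which for a suitable region the classification forbids.

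First I record the compactness input. If $g$ is a harmonic disc of a solution $\Psi$ then the defining condition is the identity of currents $g^{*}\omega_{FS}+dd^{c}_{\tau}\bigl[\Psi(g(\tau),\tau)\bigr]=0$ on $\mathbb{D}$; in particular $\tau\mapsto\Psi(g(\tau),\tau)$ is superharmonic with $-dd^{c}$ equal to the smooth nonnegative area form $g^{*}\omega_{FS}$. Testing against a cut-off function, the boundedness of $\Psi$ gives a bound on the area $\int_{K}g^{*}\omega_{FS}$ over any $K\subset\subset\mathbb{D}$ depending only on $K$ and $\|\Psi\|_{C^{0}}$. Hence if $g_{k}$ are harmonic discs of solutions $\Phi_{k}$ with $\Phi_{k}\to\Phi$ uniformly (so $\sup_{k}\|\Phi_{k}\|_{C^{0}}<\infty$), the discs have uniformly bounded area over compacts, and Gromov compactness yields a subsequence converging, away from at most finitely many points of energy concentration, to a holomorphic map; removing these (removable, by the area bound) singularities gives a holomorphic $g\colon\mathbb{D}\to\mathbb{P}^{1}$, and letting $k\to\infty$ in the current identity (using $\Phi_{k}\to\Phi$ uniformly and the continuity of $\Phi$) shows $g$ is a harmonic disc of $\Phi$.

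Next I supply the geometry. I take $\phi$ to be the symmetric boundary data of the previous theorem, for which the Hele-Shaw flow is multiply connected for $t$ in an interval $(t_{1},t_{2})$ with $0<t_{1}<t_{2}<1$, and let $H$ be the resulting hole, which is filled in only at time $t_{2}$. Writing $w=\tau z$ and letting $f_{t}\colon\mathbb{D}\to\Omega_{t}$ be the Riemann map with $f_{t}(0)=0$, the classification theorem says that $(z,\tau)$ with $\tau\neq 0$ lies on a disc of $\Phi$ precisely when $w\notin\Omega_{1}$ (type (2)), or $z=0$ (type (1)), or $w$ lies on one of the curves $f_{t}(\{|\zeta|=|\tau|\})$ for some simply connected $\Omega_{t}$ (type (3)). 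Since these curves exist for $t\le t_{1}$ and for $t\ge t_{2}$ but not for $t\in(t_{1},t_{2})$, a point $w\in H$ is swept by none of them once we fix a radius $r_{0}$ strictly below the positive number $\inf_{w\in H}|f_{1}^{-1}(w)|$: such $w$ lies outside $\Omega_{t_{1}}$ (so is not reached before the hole forms) and its $f_{t}$-preimages for $t\ge t_{2}$ all have modulus exceeding $r_{0}$ (so it is not reached after the hole fills), while $w\in H\subset\Omega_{1}$ excludes type (2) and $w\neq 0$ excludes type (1). As these conditions are open and strict, I may choose $H'\subset\subset H$ and $\delta>0$ small so that $U':=\{(w/\tau,\tau):w\in H',\ |\tau|\in(r_{0}-\delta,r_{0}+\delta)\}$ has compact closure $\overline{U'}$ in $\mathbb{P}^{1}\times\mathbb{D}$ disjoint from every harmonic disc of $\Phi$.

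Finally I assemble the argument. Because $\omega_{FS}+dd^{c}\phi(\cdot,\tau)$ is K\"ahler, the smallness of $\|\phi'-\phi\|_{C^{2}(\mathbb{P}^{1}\times\partial\mathbb{D})}$ keeps $\omega_{FS}+dd^{c}\phi'(\cdot,\tau)$ K\"ahler (an open condition on second derivatives), so $\Phi'$ exists, and the comparison principle makes the weak solution $1$-Lipschitz in its boundary values for the sup norm, so $\phi'\to\phi$ in $C^{0}$ forces $\Phi'\to\Phi$ uniformly. If the theorem failed for this $U'$, there would be $\phi_{k}\to\phi$ in $C^{2}$ and harmonic discs $g_{k}$ of $\Phi_{k}$ with $(g_{k}(\tau_{k}),\tau_{k})\in U'$; passing to subsequences with $\tau_{k}\to\tau_{*}\in\mathbb{D}$ and applying the compactness input produces a harmonic disc $g$ of $\Phi$ with $(g(\tau_{*}),\tau_{*})\in\overline{U'}$, contradicting $\overline{U'}\cap\{\text{discs of }\Phi\}=\emptyset$. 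The main obstacle is precisely this last compactness step: with only the area bound one cannot a priori exclude energy concentration (bubbling), and one must rule out that the bubbling happens exactly where the $g_{k}$ meet $U'$, for otherwise the limit disc $g$ might fail to register the intersection with $\overline{U'}$; establishing clean $C^{0}_{\mathrm{loc}}$ convergence of the $g_{k}$ near $\tau_{*}$ (equivalently, a uniform local bound on the discs through $U'$) is the crux that turns the uncontrolled perturbed solutions into the rigid classification for $\Phi$.
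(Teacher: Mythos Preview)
Your outline is honest about its own gap, and that gap is real: the area bound alone does not prevent bubbling, and if a bubble forms at $\tau_*$ the limit graph need not meet $\overline{U'}$. You correctly identify that what is missing is a uniform local $C^0$ bound on the discs $g_k$, but you do not supply one. The paper supplies exactly this bound, and the mechanism is different from Gromov compactness.

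The paper's device is as follows. One first arranges (possibly after a small modification of $\phi$ near $0$, which does not affect the multiply-connected behaviour) that the unperturbed solution $\Phi$ agrees on a neighbourhood $D_{r'}\times\overline{\mathbb D}$ of $\{0\}\times\overline{\mathbb D}$ with $\Psi+h(\tau z)$, where $\Psi(z,\tau)=u(z)$ is a \emph{regular} solution to the HMAE (for a rotation-invariant K\"ahler potential $u$) and $h$ is harmonic. Donaldson's Openness Theorem then guarantees that the perturbed boundary data $u+ (\phi_k-\phi)$ still admit regular solutions $\Psi_k$, whose foliations converge to that of $\Psi$; comparing $\Psi_k+h(\tau z)$ with $\Phi_k$ via the maximum principle shows that $\Phi_k$ is itself regular on a fixed neighbourhood $U_k\supset D_r\times\mathbb D$ and is foliated there by the harmonic discs of $\Psi_k$. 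Since the graph of $g_k$ meets $U'$, which is disjoint from this neighbourhood, and since in a regular region distinct harmonic discs cannot cross, the graph of $g_k$ never enters $U_k$. This yields the uniform bound $|g_k(\tau)|\ge r$ for all $\tau\in\mathbb D$, whence $1/g_k$ is a bounded family of holomorphic functions and Montel gives honest $C^0_{\mathrm{loc}}$ convergence with no bubbling to worry about.

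A secondary point: your construction of $U'$ via $f_1^{-1}$ and the modulus threshold $r_0$ is shaky, since $\Omega_1$ need not be simply connected (indeed $\Omega_1$ has full area, so the Riemann map $f_1$ is not generally available), and the claimed lower bound on $|f_t^{-1}(w)|$ for all $t\ge t_2$ is not justified. The paper avoids this by using the continuous Hamiltonian $H(z,\tau)$: the open set $U=\{t_1-1<H(\tau z,\tau)<t_2-1,\ |\tau|>0\}$ already misses every harmonic disc of $\Phi$ (because $H$ is constant along such discs with value outside $(t_1-1,t_2-1)$), and one simply takes $U'$ relatively compact in $U\cap((\mathbb P^1\setminus\{0\})\times\mathbb D^\times)$.
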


The first theorem gives solutions that are not ``partially smooth'' and the second examples whose perturbed solutions are not ``almost smooth'',  in apparent contradiction with  \cite[Thm. 1.3.2]{ChenTian} and \cite[Thm. 1.3.4]{ChenTian} respectively (see Section \ref{sec:ChenTian}).

\subsection*{Comparison with other works}

In this paper we consider the Hele-Shaw flow with varying permeability starting from the origin which has been considered before by Hedenmalm-Shimorin; in fact the basic properties of the flow that we use are a small variant of those in \cite{Hedenmalm}. There is a much larger literature on the Hele-Shaw flow (which usually considers the case of constant permeability but with non-trivial initial condition $\Omega_0$) for which we refer the reader to the book \cite{Gustafsson} of Gustafsson-Vasil'ev and the references therein.

In \cite{Donaldson} Donaldson gave the first example of boundary data for which the Dirichlet problem for the HMAE on some $X\times \overline{\mathbb{D}}$ has no regular solution. 

One motivation for studying the regularity of solutions to the HMAE is the work of Semmes \cite{Semmes} and Donaldson \cite{Donaldson}, which shows that the geodesic equation in the space of K\"ahler metrics on a compact K\"ahler manifold $(X,\omega)$ cohomologous to $\omega$ can be cast as a Dirichlet problem for the HMAE on $X\times A$ where $A$ is an annulus. In this way the question of geodesic connectivity is translated into a question of regularity for solutions to the HMAE. In \cite{LempertVivas} Lempert-Vivas found such boundary data for which the solution failed to be regular, and thus showed that not all pairs of cohomologous K\"ahler metrics can be connected by a geodesic (see also later work by Darvas-Lempert \cite{DarvasLempert} and Darvas \cite{Darvas}).  

In \cite{DarvasLempert} examples are given for which the solution (again with the base being an annulus rather than a disc) does not have continuous second derivative.  We expect that the examples considered here in fact fail to be twice differentiable at any point $(z,1)$ where $z\in \mathbb P^1$ is a self-intersection point of the boundary of a simply connected Hele-Shaw domain $\Omega_t$.  Finally, we refer the reader to \cite[Chapter 2]{Guedjbook} for a discussion of the analogous problem of the complex HMAE for domains in $\mathbb C^n$.

The connection between the HMAE and the Hele-Shaw flow has been studied previously by the authors in \cite{RossNystromHeleShaw} (see also \cite{DonaldsonNahm} for another connection between the HMAE and free boundary problems).    The results there are in the opposite direction to those here, in that we use known regularity results of the HMAE (and thus the existence of the associated foliation by holomorphic discs) to prove short time regularity of the Hele-Shaw flow.     

\subsection*{Acknowledgements} 

We would particularly like to thank Bo Berndtsson for his close reading of the first version of this paper which has led to many improvements.  We also thank Robert Berman, Zbigniew B\l ocki, Laszlo Lempert and Yanir Rubinstein for discussions concerning this work.

\section{The Hele-Shaw flow} \label{Sec:HS}

\subsection{Definition and Basic Properties}

Assume $\phi$ is a smooth K\"ahler potential on $\mathbb P^1$, so $\omega_\phi:=\omega_{FS} + dd^c\phi$ is K\"ahler.  Given $t\in [0,1]$ we define 
\begin{equation}
\psi_t = \sup\{ \psi: \psi \text{ is usc and } \omega_{\psi}\ge 0 \text{ and } \psi\le \phi \text{ and } \nu_0(\psi)\ge t\}.\label{eq:envelope}
\end{equation}
Here $\omega_{\psi}:=\omega_{FS}+dd^c\psi$ and $\nu_0$ denotes the Lelong number, so $\nu_0(\psi)\ge t$ means that there is a constant $C$ such that $\psi(z) \le t\ln |z|^2 +C$ for all $z$ near $0\in \mathbb C\subset \mathbb P^1$. As the usc regularization of $\psi_t$ is itself a candidate for the envelope we see that $\psi_t$ is usc.

We now define the Hele-Shaw flow by
\begin{equation}
\Omega_t:=\Omega_t^{\phi}:=\{z\in \mathbb P^1: \psi_t(z)<\phi(z)\}\label{eq:heleshawdef}.
\end{equation}

\begin{proposition}(Basic Properties of Hele-Shaw flow) \label{prop:basicHS}
\begin{enumerate}
\item \label{item1} $\Omega_0=\emptyset$ and $0\in \Omega_t$ for $t>0.$ 
\item \label{item2} $\Omega_t$ is open, connected and $\partial \Omega_t$ has measure zero.
\item \label{item3} $\psi_t$ is $C^{1,1}$ on $\mathbb P^1\setminus\{0\}$. 
\item \label{item4} $$\omega_{\psi_t} = (1-\chi_{\Omega_t}) \omega_{\phi} +t\delta_0$$
in the sense of currents. Here $\chi_A$ denotes the characteristic function of a set $A$, and $\delta_0$ the Dirac delta.
\item \label{item5} $$\Area(\Omega_t):=\int_{\Omega_t}\omega_{\phi}=t.$$  
\end{enumerate}
\end{proposition}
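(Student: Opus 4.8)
The plan is to treat $\psi_t$ as the solution of an obstacle problem and to read off each item from the structure of its Monge--Amp\`ere measure. First I would record the preliminaries. Every competitor $\psi$ satisfies $\psi\le\phi\le\sup\phi$ and, being globally $\omega_{FS}$-psh, carries total mass $\int_{\mathbb P^1}\omega_\psi=\int_{\mathbb P^1}\omega_{FS}$; in particular its Lelong mass at $0$ is bounded independently of $\psi$. Combined with convexity of $r\mapsto\sup_{|z|=r}(\psi+\ln(1+|z|^2))$ in $\ln r$ and the constraint $\nu_0(\psi)\ge t$, this yields a uniform bound $\psi\le t\ln|z|^2+C$ near $0$, so the envelope $\psi_t$ is a well-defined usc $\omega_{FS}$-psh function with $\psi_t\le\phi$ and $\nu_0(\psi_t)\ge t$. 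Item (1) is then immediate: for $t=0$ the constraint is vacuous and $\phi$ itself is the maximal competitor, so $\psi_0=\phi$ and $\Omega_0=\emptyset$; for $t>0$ the bound gives $\psi_t(0)=-\infty<\phi(0)$, so $0\in\Omega_t$, and $\Omega_t$ is open because $\psi_t-\phi$ is usc.

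The analytic heart is item (3). Away from the pole, $\psi_t$ is exactly the envelope of $\omega_{FS}$-psh functions lying below the smooth obstacle $\phi$, and I would invoke the regularity theory for such envelopes (the obstacle problem; cf.\ Berman, or the classical one-dimensional theory) to conclude $\psi_t\in C^{1,1}_{\mathrm{loc}}(\mathbb P^1\setminus\{0\})$. This is the step I expect to be the main obstacle, since everything downstream uses that $\omega_{\psi_t}$ is absolutely continuous with $L^\infty$ density off $0$. Granting it, item (4) follows by splitting $\mathbb P^1$ into three pieces. On $\Omega_t\setminus\{0\}$ the standard balayage/Perron argument applies: if $\omega_{\psi_t}$ had positive mass on a small ball where $\psi_t<\phi$, one could replace $\psi_t$ there by the larger $\omega_{FS}$-harmonic function with the same boundary values (still $\le\phi$ on a small enough ball, still $\omega_{FS}$-psh after gluing, with unchanged Lelong number), contradicting maximality; hence $\omega_{\psi_t}=0$ there. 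On the open contact set $\psi_t=\phi$, so $\omega_{\psi_t}=\omega_\phi$, and since both functions are $C^{1,1}$ and touch from below, $\omega_{\psi_t}=\omega_\phi$ holds a.e.\ on the whole coincidence set $\mathbb P^1\setminus\Omega_t$ (at a.e.\ contact point the second derivatives must agree). Reading off the atom at the pole gives $\omega_{\psi_t}=(1-\chi_{\Omega_t})\omega_\phi+c\,\delta_0$ with $c:=\nu_0(\psi_t)\ge t$.

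Connectedness in item (2) I would prove by a minimum-principle argument. Set $u:=\psi_t+\ln(1+|z|^2)$ and $\phi^{\sharp}:=\phi+\ln(1+|z|^2)$, so that $\phi^{\sharp}$ is strictly subharmonic. If $V$ were a component of $\Omega_t$ with $0\notin V$, then $u$ is harmonic on $V$ by item (4) and equals $\phi^{\sharp}$ on $\partial V$ by continuity, so $u-\phi^{\sharp}$ is superharmonic, vanishes on $\partial V$, and is therefore $\ge 0$ on $V$; this forces $\psi_t\ge\phi$ on $V$, contradicting $V\subset\Omega_t$. Thus $\Omega_t$ is connected. For the measure-zero claim I would note that $\rho:=\phi-\psi_t\ge 0$ satisfies $dd^c\rho=\omega_\phi$ on $\Omega_t$ with density bounded below, so $\rho$ solves an obstacle problem with nondegenerate right-hand side; the nondegeneracy estimates for the obstacle problem then give that the free boundary $\partial\Omega_t$ has (here one-dimensional, hence planar Lebesgue) measure zero.

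It remains to pin down $c=t$, which simultaneously yields the rest of (4) and all of (5). I would argue $c\le t$ by contradiction. By item (4), $u$ is harmonic on $\Omega_t\setminus\{0\}$ with logarithmic pole of strength $c$, so $\psi_t=c\ln|z|^2+O(1)$ near $0$. If $c>t$, set $\sigma:=t\ln|z|^2+M$; since $t<c$ the difference $\sigma-\psi_t=(t-c)\ln|z|^2+O(1)\to+\infty$ as $z\to 0$, while a suitable choice of $M$ (equivalently, a small $\delta$) makes $\sigma<\psi_t$ near the circle $|z|=\delta$ and $\sigma\le\phi$ throughout the region where $\sigma>\psi_t$. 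Then $\tilde\psi:=\max(\psi_t,\sigma)$ on $\{|z|<\delta\}$ and $\tilde\psi:=\psi_t$ elsewhere is a legitimate competitor (usc, $\omega_{FS}$-psh after gluing, $\le\phi$, with $\nu_0(\tilde\psi)=t$) that strictly exceeds $\psi_t$ near $0$, contradicting maximality. Hence $c=t$. Finally, integrating item (4) over $\mathbb P^1$ and using that $\omega_{\psi_t}$ and $\omega_\phi$ both represent $[\omega_{FS}]$, hence have equal total mass, gives $c=\int_{\Omega_t}\omega_\phi=\Area(\Omega_t)$, so $\Area(\Omega_t)=c=t$. This is item (5) and completes item (4).
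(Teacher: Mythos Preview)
Your outline is correct in substance and runs parallel to the paper's argument, but two points deserve comment.

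First, your sentence ``away from the pole, $\psi_t$ is exactly the envelope of $\omega_{FS}$-psh functions lying below the smooth obstacle $\phi$'' is false as written: that unconstrained envelope is $\phi$ itself. What you need (and what you implicitly use by citing Berman) is that $\psi_t$ \emph{locally} solves an obstacle problem once one works on a domain avoiding the pole. The paper makes this reduction explicit: it fixes a small disc $D\ni 0$ with $D\subset\Omega_t$, defines on $\mathbb P^1\setminus D$ the envelope $\psi'$ of $\omega_{FS}$-psh functions bounded above by $\phi$ and by $\psi_t$ on $\partial D$, and shows $\psi'=\psi_t$ there by a two-sided comparison. After the inversion $w=1/z$ this is a textbook obstacle problem on a disc with smooth obstacle and continuous boundary data, so classical free-boundary theory (Caffarelli--Kinderlehrer, Caffarelli--Rivi\`ere) gives the $C^{1,1}$ regularity, the structure of $\omega_{\psi_t}$, and the measure-zero free boundary in one stroke. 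Your route via Berman's global result is legitimate but hides this localisation; either way the analytic input is the same.

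Second, your determination of the Lelong number is correct but unnecessarily elaborate. The paper simply notes that $t\ln\frac{|z|^2}{1+|z|^2}+C$ (for $C\ll 0$) is a competitor with Lelong number exactly $t$, so $\psi_t$ dominates it and hence $\nu_0(\psi_t)\le t$; combined with the defining constraint this gives $\nu_0(\psi_t)=t$ in one line. Your max-construction reaches the same conclusion but with more moving parts. Your connectedness argument via the minimum principle is exactly the Hedenmalm--Shimorin argument the paper cites, and your derivation of item~(5) from item~(4) by integrating over $\mathbb P^1$ matches the paper's appeal to Stokes.
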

\begin{proof}
This techniques used for this are standard (see e.g. \cite{Hedenmalm}) but for the convenience of the reader we sketch some details.

Clearly $\psi_0\equiv \phi$ and thus $\Omega_0=\emptyset$ so the Lemma is obviously true for $t=0.$ Thus let $t>0.$ That $0\in \Omega_t$ is obvious. Since $\psi_t$ is upper semicontinuous $\psi_t<\phi$ in a neighbourhood of the origin. It follows from standard potential theory that $\psi_t+\ln(1+|z|^2)$ is harmonic in any open set contained in $\Omega_t\setminus\{0\},$ so in particular $\psi_t$ is smooth on any punctured disc $D$ centered at the origin contained in $\Omega_t$.  Define
$$\psi' = \sup\left\{\begin{array}{l} \psi \text{ is usc on } \mathbb P^1\setminus D\text{ with }{\omega_{FS}}_{|\mathbb{P}^1\setminus D}+dd^c\psi\geq 0\text{ and } \psi\leq \phi_{|\mathbb{P}^1\setminus D}
\\ 
\text{and }  \psi\leq \psi_t \text{ on }\partial D
\end{array}\right\}.$$

Since ${\psi_t}_{|\mathbb{P}^1\setminus D}$ is a candidate for the envelope $\psi'$ we get that $\psi'\geq {\psi_t}_{|\mathbb{P}^1\setminus D}.$ On the other hand, if $\epsilon>0$ then extending $\max(\psi_t,\psi'-\epsilon)$ by $\psi_t$ on $D$ gives an $\omega_{FS}$-subharmonic function on $\mathbb{P}^1$ which is thus a candidate for the envelope defining $\psi_t$.  Hence $\psi'\leq {\psi_t}_{|\mathbb{P}^1\setminus D}+\epsilon$ and so $\psi'={\psi_t}_{|\mathbb{P}^1\setminus D}.$ 

If we let $w=1/z$ then $\psi'(w)+\ln(1+|w|^2)$ now solves a standard free boundary problem on the disc $\mathbb P^1\setminus D$ with obstacle given by $\phi(w)+\ln(1+|w|^2)$ and boundary condition given by $\psi_t(w)+\ln(1+|w|^2)$ restricted to $\partial D.$ So (\ref{item3}) follows from standard theory of free boundary problems (e.g. \cite[Thm 2.3]{CaffarelliK}) and (\ref{item4}) follows from (\ref{item3}) on $\mathbb P^1\setminus D$.  On the other hand the function $t\ln |z|^2 +C$ for some constant $C$ is a candidate for the envelope defining $\psi_t$, so $\nu_0(\psi_t) = t$, giving (\ref{item4}) on all of $\mathbb P^1$.   Moreover (\ref{item5}) in turn follows from (\ref{item4}) by Stokes theorem. 

That $\Omega_t$ is open of course also follows from $\psi_t$ being continuous.   The proof that $\Omega_t$ is connected is as in \cite[Prop 2.6]{Hedenmalm}.   Finally the fact that $\partial \Omega_t$ has zero measure again follows from standard theory of free boundary problems, see e.g \cite[p296]{Caffarelli}. In fact even more is proved in \cite{Caffarelli}, namely that each component of the boundary consists of a finite number of rectifiable Jordan curves.
\end{proof}

\begin{remark}
  Item (4) in particular implies that the sets $\Omega_t$ together with $\omega_{\phi}$ contain the same information as the functions $\psi_t$, and so we also think of this latter collection as the Hele-Shaw flow.    Items (3), (4) and (5) of the above Proposition also follow from more general work of Berman \cite[Sec 4]{Bermanample}.
\end{remark}

Clearly by definition if $t\le t'$ then $\Omega_t\subset \Omega_{t'}$ so this is an increasing flow of subsets of $\mathbb P^1$.    Observe also that if $\phi$ is rotation invariant then so is the flow $\Omega_t$.

An important fact is that the family of functions $\psi_t$ is concave in $t.$

\begin{proposition}
For any given $z$ we have that $\psi_t(z)$ is concave, decreasing and continuous in $t$ for $t\in [0,1].$
\end{proposition}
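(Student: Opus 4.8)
The plan is to read off all three properties directly from the variational definition \eqref{eq:envelope}, treating concavity as the central point and deducing continuity from it together with monotonicity. First, monotonicity is immediate: if $t\le t'$ then the constraint $\nu_0(\psi)\ge t'$ is strictly stronger than $\nu_0(\psi)\ge t$, so the class of admissible competitors shrinks and the supremum can only decrease; hence $\psi_{t'}\le\psi_t$ pointwise, i.e.\ $t\mapsto\psi_t(z)$ is decreasing.

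The key step is concavity. Fix $t_0,t_1\in[0,1]$ and $\lambda\in[0,1]$, and set $t=\lambda t_0+(1-\lambda)t_1$. I would show that the convex combination $\psi:=\lambda\psi_{t_0}+(1-\lambda)\psi_{t_1}$ is itself an admissible competitor in the envelope defining $\psi_t$, which at once gives $\psi_t\ge\lambda\psi_{t_0}+(1-\lambda)\psi_{t_1}$ and hence concavity. This reduces to checking the four defining conditions: $\psi$ is usc as a convex combination of usc functions; $\psi\le\phi$ since each $\psi_{t_i}\le\phi$ and the weights sum to $1$; the positivity $\omega_\psi\ge 0$ holds because $\omega_{FS}+dd^c\psi=\lambda\,\omega_{\psi_{t_0}}+(1-\lambda)\,\omega_{\psi_{t_1}}\ge 0$ (here it is essential that $\lambda+(1-\lambda)=1$, so the $\omega_{FS}$ term is accounted for exactly once); and the Lelong bound $\nu_0(\psi)\ge t$ follows by combining the two logarithmic estimates $\psi_{t_i}(z)\le t_i\ln|z|^2+C_i$ near $0$ with weights $\lambda,1-\lambda$.

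It then remains to establish continuity. A finite concave function of one real variable is automatically continuous on the open interval $(0,1)$, so only the endpoints need attention. At $t=0$, concavity gives $\liminf_{t\to0^+}\psi_t(z)\ge\psi_0(z)$ while monotonicity gives $\psi_t(z)\le\psi_0(z)$, so $\psi_t(z)\to\psi_0(z)=\phi(z)$. The delicate endpoint is $t=1$, where concavity and monotonicity alone do \emph{not} suffice (a decreasing concave function may jump down at its right endpoint). Here I would argue directly: since $\psi_t$ decreases in $t$, the limit $u:=\lim_{t\to1^-}\psi_t=\inf_{t<1}\psi_t$ is a decreasing limit of $\omega_{FS}$-subharmonic functions bounded below by $\psi_1$, hence is usc, satisfies $\omega_u\ge 0$ and $u\le\phi$; and since $u\le\psi_t\le t\ln|z|^2+C_t$ near $0$ for every $t<1$, we get $\nu_0(u)\ge 1$.

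Thus $u$ is an admissible competitor for $\psi_1$, giving $\psi_1\ge u$, while the reverse inequality holds by monotonicity; hence $\psi_1=\lim_{t\to1^-}\psi_t$ and continuity at $t=1$ follows. I expect the main obstacle to be precisely this right-endpoint continuity: one must verify that passing to the decreasing limit preserves the Lelong bound (so that the limiting function remains admissible), which is the only point where concavity plus monotonicity is not enough and a separate stability argument for $\nu_0$ is required.
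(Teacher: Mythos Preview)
Your proposal is correct and follows essentially the same route as the paper: concavity by observing that the convex combination $\lambda\psi_{t_0}+(1-\lambda)\psi_{t_1}$ is an admissible competitor for $\psi_{\lambda t_0+(1-\lambda)t_1}$, monotonicity from the nested constraints, and continuity by combining concavity with the observation that the decreasing limit $\lim_{t\to s^-}\psi_t$ remains $\omega_{FS}$-subharmonic with the required Lelong bound and is therefore itself a competitor for $\psi_s$. The only organizational difference is that the paper runs the decreasing-limit argument at \emph{every} $s\in[0,1]$ to get left-continuity in one stroke (after extending $\psi_t:=\phi$ for $t<0$), and then remarks that a concave left-continuous function is continuous; you instead invoke interior continuity of concave functions and treat the two endpoints separately, being somewhat more explicit than the paper about why the Lelong bound survives the limit at $t=1$.
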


\begin{proof}
Let us define $\psi_t:=\phi$ for $t<0.$ It is then clear that $\psi_t$ is concave in $t$ since if $t=at_1+(1-a)t_2$ where $a\in [0,1]$ and $t_1,t_2\in (-\infty,1]$ then $$a\psi_{t_1}+(1-a)\psi_{t_2}\leq \psi_t$$ simply because the LHS has at least Lelong number $t$ at the origin while being bounded from above by $\phi.$ That it is decreasing is obvious. That $\psi_t$ decreases with $t$ then implies that $\lim_{t \to s-}\psi_t$ is $\omega_{FS}$-subharmonic and thus one sees that $$\lim_{t \to s-}\psi_t=\psi_s,$$ i.e. $\psi_t$ is left-continuous in $t$. This combined with concavity implies continuity.  
\end{proof}

\subsection{Multiply Connected Hele-Shaw Domains} \label{subsection:multcon}

As already mentioned, the sets $\Omega_t$ have a physical interpretation.  They describe the flow obtained by injecting a fluid at a point between two parallel plates between which there is a medium with permeability inversely proportional to $\Delta(\phi+\ln(1+|z|^2))$.   As such it is intuitively clear that there will be $\phi$ for which this flow becomes multiply connected.   In fact suppose we arrange so $\Delta (\phi+\ln (1+|z|^2)$ is very small on some Jordan curve going through the origin, while $\Delta (\phi+\ln (1+|z|^2)$ being relatively large in two regions separated by the curve.  The the flow will then  cover the curve before having the chance to engulf either of the regions, thereby giving rise to multiply connected Hele-Shaw domains. We now prove that this does indeed happen for suitable choices of $\phi$.

\begin{proposition} \label{prop:notsc}
There exists a smooth K\"ahler potential on $\mathbb P^1$ whose associated Hele-Shaw flow has the following property: there exist two times $0<t_1<t_2<1$ such that for all $t\in (t_1,t_2)$ the Hele-Shaw domain $\Omega_t$ is \emph{not} simply connected.
\end{proposition}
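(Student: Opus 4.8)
The plan is to prescribe the measure $\mu:=\omega_\phi$ directly and to shape its density so that the injected fluid is forced to race around a loop before it can fill the heavy regions on either side of that loop. Concretely, fix a smooth Jordan curve through the origin, say $\gamma=\{|z-1|=1\}$, which separates $\mathbb{P}^1$ into the bounded disc $R_{\mathrm{in}}=\{|z-1|<1\}$ and the unbounded region $R_{\mathrm{out}}=\{|z-1|>1\}\cup\{\infty\}$. Let $N=\{\,\big||z-1|-1\big|<\delta\,\}$ be a thin annular channel around $\gamma$, so that $0\in N$. I will choose a smooth positive volume form $\mu$ of total mass $1$ that is very small on $N$, with $\mu(N)=:\alpha$, and comparatively large on the two bulk regions $R_{\mathrm{in}}\setminus N$ and $R_{\mathrm{out}}\setminus N$, of masses $m_{\mathrm{in}}$ and $m_{\mathrm{out}}$; I take $\alpha<m_{\mathrm{in}}$ and $\alpha<m_{\mathrm{out}}$ (for instance $\alpha=\tfrac15$ and $m_{\mathrm{in}}=m_{\mathrm{out}}=\tfrac25$). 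Since $\int_{\mathbb{P}^1}(\mu-\omega_{FS})=0$, the linear equation $\omega_{FS}+dd^c\phi=\mu$ has a smooth solution $\phi$, which is the required K\"ahler potential.

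The two ``upper bounds'' are immediate from the area identity in Proposition \ref{prop:basicHS}(\ref{item5}). If $R_{\mathrm{in}}\setminus N\subseteq\Omega_t$ then $t=\Area(\Omega_t)\ge\mu(R_{\mathrm{in}}\setminus N)=m_{\mathrm{in}}$; hence for every $t<m_{\mathrm{in}}$ there is a point $p_{\mathrm{in}}\in(R_{\mathrm{in}}\setminus N)\setminus\Omega_t$, and likewise a point $p_{\mathrm{out}}\in(R_{\mathrm{out}}\setminus N)\setminus\Omega_t$ for $t<m_{\mathrm{out}}$. The essential remaining point is a lower bound: that the flow closes a loop $\Gamma\subset N$ encircling $R_{\mathrm{in}}$ at some time $t_1<\min(m_{\mathrm{in}},m_{\mathrm{out}})=:t_2$. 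Granting this, for every $t\in(t_1,t_2)$ monotonicity of the flow gives $\Gamma\subseteq\Omega_{t_1}\subseteq\Omega_t$, while $p_{\mathrm{in}}$ lies in the bounded complementary component of $\Gamma$ and $p_{\mathrm{out}}$ in the unbounded one. Any path in $\mathbb{P}^1\setminus\Omega_t$ joining $p_{\mathrm{in}}$ to $p_{\mathrm{out}}$ must cross $\Gamma\subseteq\Omega_t$, so $\mathbb{P}^1\setminus\Omega_t$ is disconnected; since $\Omega_t$ is a connected open subset of the sphere (Proposition \ref{prop:basicHS}(\ref{item2})), this is exactly the statement that $\Omega_t$ is not simply connected.

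To produce the loop I will use a comparison argument. Recall that $\Omega_t$ is the saturated set of the partial balayage of the source $t\delta_0$ with respect to the reference measure $\mu$, and that this saturated set is monotone decreasing in the reference measure (see \cite{Gustafsson}); that is, enlarging $\mu$ shrinks $\Omega_t$. I therefore compare with the auxiliary measure $\mu_2$ equal to $\mu$ on $N$ and equal to a large constant $M\ge\sup_{\mathbb{P}^1\setminus N}\mu$ off $N$, so that $\mu_2\ge\mu$ and hence $\Omega_t\supseteq\Omega_t^{\mu_2}$. For $M$ large the flow $\Omega_t^{\mu_2}$ is essentially confined to the channel $N$, the huge density off $N$ acting as a barrier the fluid cannot cross with only a little mass. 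Starting from $0\in N$, this confined flow spreads in both directions along the thin annulus and, having traversed the whole circumference of $\gamma$, meets itself on the far side and closes a loop $\Gamma\subset N$ around $R_{\mathrm{in}}$. At loop closure the covered region lies, up to a leakage tending to $0$ as $M\to\infty$, inside $N$, so its $\mu_2$-mass is at most $\mu_2(N)+o(1)=\alpha+o(1)<t_2$; thus for $M$ large enough the loop closes at a time $t_1\le\alpha+o(1)<t_2$, as required.

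The main obstacle is precisely this lower bound, i.e. controlling the confined channel flow well enough to guarantee that it wraps all the way around $\gamma$ before the mass $t_2$ is reached. The delicate points are making the confinement quantitative for large but finite $M$ (the fluid does leak slightly out of $N$, and one must check this does not consume mass $t_2$ before closure) and verifying that the confined flow genuinely connects up around the thin annulus rather than stalling; the latter is clear from the essentially one-dimensional picture of spreading along $\gamma$, but needs a barrier. An alternative to the balayage comparison is to build an explicit subsolution $u\ge 0$ on a loop-shaped domain $D\subset N$ with $\Delta u=\mu-t\delta_0$ in $D$ and matching Cauchy data on $\partial D$, and to invoke the standard subsolution criterion for Hele-Shaw flows; this trades the confinement limit for a routine but tedious construction of $u$.
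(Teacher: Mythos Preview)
Your overall architecture matches the paper's: thin the density of $\omega_\phi$ along a Jordan curve $\gamma$ through $0$ so that the flow engulfs $\gamma$ before it can fill the heavy regions on either side. The area argument for the upper bound---locating points $p_{\mathrm{in}},p_{\mathrm{out}}\notin\Omega_t$ for $t<t_2$ via Proposition~\ref{prop:basicHS}(\ref{item5})---is exactly the paper's.

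Where you diverge, and where the genuine gap lies, is the lower bound. You reduce loop closure to the comparison flow $\Omega_t^{\mu_2}$ confined by a large off-channel density $M$, and then assert heuristically that this flow ``spreads in both directions'' and ``meets itself on the far side'' with leakage $o(1)$. None of this is actually proved, as you yourself acknowledge; completing it would require either a quantitative barrier analysis in the channel or the explicit subsolution construction you mention, neither of which you carry out. There is also a minor framework issue: your $\mu_2$ has total mass strictly greater than $1$, so it is not of the form $\omega_{FS}+dd^c\phi'$ for any K\"ahler potential on $\mathbb{P}^1$, and the envelope $\psi_t$ of Section~\ref{Sec:HS} is not directly defined for it---you would have to pass to a pure partial-balayage formulation in the plane and check that it matches.

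The paper sidesteps all of this with a cleaner device: go to the \emph{fully degenerate} limit first. Take $\omega_{\phi_f}=f\omega_{FS}$ with $f\equiv 0$ on a neighbourhood $U_\gamma$ of $\gamma$ (not merely small). Then on $U_\gamma\setminus\{0\}$ the difference $\psi_{t_0}-\phi_f$ is genuinely harmonic, nonpositive, and has a logarithmic pole at $0$; the maximum principle immediately forces $\psi_{t_0}-\phi_f<0$ on all of $U_\gamma$, hence $\gamma\subset\Omega_{t_0}^{\phi_f}$, with no channel-flow analysis whatsoever. The return to a strictly positive density is a soft stability step: the relaxed envelopes $\psi_\epsilon=\sup\{\psi:\omega_\psi\ge-\epsilon\,\omega_{FS},\ \psi\le\phi_f,\ \nu_0(\psi)\ge t_0\}$ decrease to $\psi_f$ as $\epsilon\downarrow 0$, so for small $\epsilon$ one still has $\psi_\epsilon<\phi_f$ on the compact set $\gamma$; setting $\phi:=\phi_f/(1+\epsilon)$ yields an honest K\"ahler potential with $\gamma\subset\Omega_{t_1}^\phi$ for $t_1=t_0/(1+\epsilon)$. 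Thus the paper replaces your unproven confinement estimates by a one-line application of the maximum principle followed by a monotone limit.
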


\begin{proof}
We will construct a $\phi$ with the following property: there exist two times $0<t_1<t_2<1,$ a Jordan curve $\gamma$ which passes through the origin and two points $p,q$ on opposite sides of $\gamma$ such that $\Omega_{t_1}$ contains $\gamma$ while neither $p$ or $q$ lie in $\Omega_{t_2}$. By monotonicity of the flow, it follows immediately that for any $t\in (t_1,t_2)$ the Hele-Shaw domain $\Omega_t$ is multiply connected.

So pick a Jordan curve $\gamma$ in $\mathbb{P}^1$ which passes through the origin, and let $U_1$ and $U_2$ denote the two connected components of the complement of $\gamma.$ Let $f$ be a smooth nonnegative function on $\mathbb{P}^1$ which is zero in a neighbourhood $U_{\gamma}$ of $\gamma$ and such that $$\int_{U_1}f\omega_{FS}=\int_{U_2}f\omega_{FS}=1/2.$$ Let $\phi_f$ be a smooth function such that $$\omega_{\phi_f}=f\omega_{FS}.$$ Fix $$0<t_0<1/4,$$ and set
$$\psi_f := \sup\{ \psi: \omega_{\psi}\ge 0, \psi\le \phi_f, \nu_0(\gamma)\ge t_0\}.$$
So $\omega_{\psi_f}  = \chi_{\{\psi_f=\phi_f\}} \omega_{\phi_f}+t_0\delta_0$ and we conclude that $\psi_f-\phi_f$ is harmonic in $U_{\gamma}\setminus \{0\}$ while having a singularity at the origin. Clearly $\psi_f-\phi_f\le 0$ on $U_{\gamma}$. Thus we conclude from the maximum principle that in fact  $\psi_f-\phi_f<0$ in $U_{\gamma}$.

Now let
$$\psi_{\epsilon} := \sup\{ \psi: \omega_{\psi} \ge -\epsilon \omega_{FS}, \psi\le \phi_f, \nu_0(\psi)\ge t_0\}.$$
Then $\psi_{\epsilon}-\phi_f$ decreases to $\psi_f-\phi_f$ as $\epsilon$ tends to zero (as for any decreasing sequence of subharmonic functions the limit is also subharmonic, and thus a candidate for $\psi_f-\phi_f$).  In particular if $z$ is a fixed point on $\gamma$ then for $\epsilon$ sufficiently small $\psi_{\epsilon} - \phi_f$ is strictly negative at $z$, and hence by upper semicontinuity of $\psi_{\epsilon} - \phi_f$ this is true in a neighbourhood of $z$. Thus for $\epsilon\ll 1$ sufficiently small, $\psi_{\epsilon} - \phi_f$ is negative on the compact set $\gamma$.

Pick such a small $\epsilon$ and let
$$\phi := \frac{\phi_f}{1+\epsilon}.$$
Then $\omega_{\phi} = (1+\epsilon)^{-1} ( \epsilon \omega_{FS} + \omega_{\phi_f})$ so $\phi$ is a K\"ahler potential.  By construction it now follows that if we let $t_1:=(1+\epsilon)^{-1}t_0$ then $\gamma \subset \Omega^{\phi}_{t_1}$. Observe that $0<t_1<1/4.$

We have that
$$\Area_{\phi}(U_1):=\int_{U_1} \omega_{\phi} = \int_{U_1} \omega_{FS} + \frac{dd^c\phi_f}{1+\epsilon} \ge \frac{1}{1+\epsilon}\int_{U_1} \omega_{\phi_f} > 1/4,$$
and similarly $\Area_{\phi}(U_2)>1/4.$ Now if we pick some $t_2$ such that $0<t_1<t_2<1/4,$ then 
$$\Area_{\phi}(\Omega^{\phi}_{t_2}) =t_2 < 1/4 < \Area_{\phi}(U_1)$$ and similarly $$\Area_{\phi}(\Omega^{\phi}_{t_2})<\Area_{\phi}(U_2).$$ In particular the sets  $U_1\setminus \Omega^{\phi}_{t_2}$ and $U_2\setminus \Omega^{\phi}_{t_2}$ must both be nonempty, which allows us to pick points $p$ and $q$ in the complement of $\Omega^{\phi}_{t_2}$ on either side of $\gamma.$ This then concludes the proof. 
\end{proof}

\section{The Legendre Transform between the HMAE and the Hele-Shaw Flow}

We shall focus on a simple case of the complex Homogeneous Monge Amp\`ere equation.   Suppose that $\phi(\cdot,\tau)$ is a smooth family of K\"ahler potentials parameterized by $\tau\in \partial \mathbb D$.  We denote by $\pi$ the projection $\mathbb P^1\times \overline{\mathbb D}\to \mathbb P^1$.

\begin{definition}\label{def:weakhmae}
The \emph{solution} $\Phi$ to the Homogeneous Monge-Amp\`ere equation with boundary data $\phi(\cdot,\tau)$ is the function on $\mathbb P^1\times \overline{\mathbb D}$ given by
$$ \Phi = \sup \{ \psi : \psi \text{ is usc and }\pi^*\omega_{FS} + dd^c\psi \ge 0\text{ and } \psi(\cdot,\tau)\le \phi(\cdot,\tau) \text{ for } \tau\in \partial\mathbb D\}.$$
\end{definition}

From general theory $\pi^*\omega_{FS}  + dd^c\Phi\ge 0$ as a current, has the boundary value $\Phi(\cdot,\tau) = \phi(\cdot,\tau)$ for $\tau\in\partial \mathbb D$ and solves the equation
$$(\pi^*\omega_{FS}+ dd^c\Phi)^{2} =0$$ 
in the sense of Bedford-Taylor.  As is well known, due to Chen \cite{Chen} with complements by B\l ocki \cite{Blocki}, the solution is almost $C^{1,1},$ and in fact since in our case $(\mathbb{P}^1,\omega_{FS})$ has nonnegative sectional curvature the solution is truly $C^{1,1}$ by the result of B\l ocki \cite[Thm. 1.4]{Blocki} (see also the recent work of Berman \cite{Bermanreg} for a proof the weak solution is $C^{1,1}_{loc}$ along the original lines of Bedford-Taylor). \medskip

Recall $\rho$ denotes the usual $\mathbb{C}^{\times}$-action on $\mathbb{P}^1$ which acts by multiplication on $\mathbb{C}\subset \mathbb{P}^1.$   Letting $\phi$ be a K\"ahler potential as before,  we wish to consider the function 
$$\phi(z,\tau):=\phi(\rho(\tau)z)$$
as boundary data to the HMAE on $\mathbb{P}^1\times \overline{\mathbb{D}}.$  Let $\Phi$ denote the weak solution to the corresponding HMAE as in \eqref{def:weakhmae}.  The goal in this section is to show that $\Phi$ is connected via a Legendre transform to the Hele-Shaw flow on $\mathbb{P}^1$ taken with respect to $\phi$.  \medskip

To do this, consider the envelope on $\mathbb P^1\times \overline{\mathbb D}$ given by
$$ \tilde{\Phi} = \sup \{ \psi : \psi\text{ is usc, } \pi^*\omega_{FS} + dd^c\psi\ge 0 \text{, } \psi(z,\tau)\le \phi(z) \text{ for } \tau\in \partial \mathbb D \text{ and } \nu_{(0,0)}(\psi)\ge 1\}$$
(so the boundary data is independent of $\tau$, and the $\psi$ have Lelong number at least one at the point $(0,0)$).  Then by standard arguments,  $\tilde{\Phi}$ is usc,  $\pi^*\omega_{FS} + dd^c\tilde{\Phi}\ge 0$ and $(\pi^*\omega_{FS} + dd^c\tilde{\Phi})^2=0$ away from $(0,0)$.  

\begin{remark}
  The function $\tilde{\Phi}$ solves the HMAE over the punctured disc $\overline{\mathbb D}^{\times}$ with boundary data independent of $\tau$, and thus is a weak geodesic ray emanating from $\phi$.
\end{remark}

\begin{proposition} \label{propequiv}
We have that $$\Phi(z,\tau)+\ln|\tau|^2+\ln(1+|z|^2)=\tilde{\Phi}(\tau z,\tau)+\ln(1+|\tau z|^2) \text{ for } (z,\tau)\in \mathbb P^1\times \overline{\mathbb D}^{\times}.$$
\end{proposition}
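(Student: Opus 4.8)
The plan is to read the stated identity as a change of variables and to prove its two inequalities separately, in each direction transporting candidate functions for one envelope into candidate functions for the other through the biholomorphism $T\colon (z,\tau)\mapsto(\tau z,\tau)$ of $\mathbb{P}^1\times\overline{\mathbb D}^{\times}$. Writing $\pi^*\omega_{FS}=dd^c\ln(1+|z|^2)$ via the local potential on $\mathbb{C}\subset\mathbb{P}^1$, I would attach to any candidate $\psi$ for $\tilde\Phi$ the function
$$\hat\psi(z,\tau):=\psi(\tau z,\tau)+\ln(1+|\tau z|^2)-\ln(1+|z|^2)-\ln|\tau|^2,$$
and to any candidate $\psi$ for $\Phi$ the function $\check\psi$ obtained from the inverse substitution $w=\tau z$. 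The claimed equality is then precisely the pair of inequalities: the transform of $\tilde\Phi$ is $\le\Phi$, and the transform of $\Phi$ is $\le\tilde\Phi$, each proved by exhibiting these transforms as legitimate candidates for the opposite envelope and then commuting the pointwise supremum past the (candidate-independent) explicit terms.

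First I would record three elementary facts that drive both directions. \emph{Positivity is preserved:} since $\ln|\tau|^2$ is pluriharmonic on $\overline{\mathbb D}^{\times}$, one computes $dd^c[\ln(1+|z|^2)+\hat\psi]=T^*\bigl(\pi^*\omega_{FS}+dd^c\psi\bigr)\ge 0$, and similarly for $\check\psi$ with $T^{-1}$. \emph{The boundary condition matches:} on $\partial\mathbb D$ one has $|\tau|=1$, so $\ln(1+|\tau z|^2)=\ln(1+|z|^2)$ and $\ln|\tau|^2=0$, whence $\hat\psi(z,\tau)=\psi(\tau z,\tau)\le\phi(\tau z)=\phi(z,\tau)$, and dually $\check\psi(w,\tau)=\psi(w/\tau,\tau)\le\phi(w)$. \emph{Candidates are uniformly bounded above:} restricting $\pi^*\omega_{FS}+dd^c\psi\ge 0$ to a slice $\{z\}\times\mathbb D$ kills $\pi^*\omega_{FS}$, so $\tau\mapsto\psi(z,\tau)$ is subharmonic; the maximum principle together with $\psi\le\phi\le C_\phi$ on $\partial\mathbb D$ gives $\psi\le C_\phi$ throughout $\mathbb{P}^1\times\overline{\mathbb D}$ for every candidate of either envelope.

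For the inequality $A\le B$ (i.e.\ transform of $\Phi$ is $\le\tilde\Phi$) I would take a candidate $\psi$ for $\Phi$ and use the global bound $\psi\le C_\phi$ in the explicit terms of $\check\psi$; a short simplification yields $\check\psi(w,\tau)\le\ln(|w|^2+|\tau|^2)-\ln(1+|w|^2)+C_\phi$, which near $(0,0)$ reads $\check\psi\le\ln(|w|^2+|\tau|^2)+C$. This is exactly the condition $\nu_{(0,0)}(\check\psi)\ge 1$, and the same bound shows $\check\psi$ is locally bounded above near the whole divisor $\{\tau=0\}$. Hence $\check\psi$ extends across $\{\tau=0\}$ to a $(\pi^*\omega_{FS})$-psh function on $\mathbb{P}^1\times\overline{\mathbb D}$ satisfying the Lelong condition, i.e.\ a candidate for $\tilde\Phi$, so $\check\psi\le\tilde\Phi$; taking the supremum over $\psi$ and undoing $T$ gives $A\le B$.

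The reverse inequality $B\le A$ runs the construction the other way: given a candidate $\psi$ for $\tilde\Phi$, its Lelong singularity $\psi(\tau z,\tau)\le\ln(|\tau z|^2+|\tau|^2)+C=\ln|\tau|^2+\ln(1+|z|^2)+C$ exactly cancels the $-\ln|\tau|^2$ in $\hat\psi$, forcing $\hat\psi\le C+\ln(1+|\tau z|^2)$ and hence local upper-boundedness near $\{\tau=0\}$ (the point $z=\infty$ needs the global bound $\psi\le C_\phi$ in place of the Lelong bound, handled by a routine split into the regions $|\tau z|$ small versus bounded below). Thus $\hat\psi$ extends to a candidate for $\Phi$, giving $\hat\psi\le\Phi$ and, after passing to the supremum, $B\le A$. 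Combining the two inequalities yields the identity on $\mathbb{P}^1\times\overline{\mathbb D}^{\times}$. I expect the main obstacle to be precisely this trade, in both directions, between the Lelong number $\nu_{(0,0)}\ge 1$ and the $\ln|\tau|^2$ factor: one must verify the local upper bounds near $\{\tau=0\}$ carefully (including at $z=\infty$) so as to legitimately extend the transformed functions across the divisor and thereby certify them as candidates for the opposite envelope; the positivity and boundary matchings are then essentially formal.
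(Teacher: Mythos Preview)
Your argument is correct and complete in outline; the one place that genuinely needs care is exactly the one you flag, namely the extension of $\hat\psi$ (resp.\ $\check\psi$) across $\{\tau=0\}$, and your sketch of the two-region split (Lelong bound where $|\tau z|$ is small, global bound $\psi\le C_\phi$ elsewhere) does the job once written out.

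Your route, however, is genuinely different from the paper's. The paper argues geometrically: it blows up $\mathbb P^1\times\overline{\mathbb D}$ at $(\infty,0)$, obtaining a space $\mathcal X$ with two blow-down maps $\mu_1,\mu_2$ to $\mathbb P^1\times\overline{\mathbb D}$, and observes that $\mu_2\circ\mu_1^{-1}(z,\tau)=(\tau z,\tau)$. A single computation of the discrepancy $\mu_1^*(\pi^*\omega_{FS})-\mu_2^*(\pi^*\omega_{FS})+[E_2]=dd^c f$ then shows in one stroke that each envelope, transported by the explicit correction, is a candidate for the other; the Lelong condition at $(0,0)$ appears automatically as the divisor $[E_2]$. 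By contrast you work entirely on $\mathbb P^1\times\overline{\mathbb D}^{\times}$, transport candidates by hand through $T(z,\tau)=(\tau z,\tau)$, and recover the missing fibre $\{\tau=0\}$ via psh extension across an analytic set, trading the Lelong number for the $\ln|\tau|^2$ term by explicit estimates. The paper's approach is shorter and makes the origin of the Lelong condition transparent (it is the exceptional divisor), while yours is more elementary in that it avoids any birational geometry and stays within basic pluripotential theory; both rest on the same change of variables, but package the singular behaviour at $\tau=0$ quite differently.
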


\begin{proof}
Consider the space $\mathcal{X}$ we get by blowing up $\mathbb{P}^1\times \overline{\mathbb{D}}$ at the point $(\infty,0)$. Let $\mu_1$ denote the modification map, and let $E_1$ denote the exceptional divisor. The central fibre thus consists of two copies of $\mathbb{P}^1$. We call the other one $E_2,$ and if we blow down this one we get again $\mathbb{P}^1\times \overline{\mathbb{D}}$. Call this modification map $\mu_2,$ so $E_2$ is now the exceptional divisor of $\mu_2.$ One can now check that 
$$\mu_2 \circ \mu_1^{-1}(z,\tau)=(\tau z,\tau),$$ except for being undefined at the point $(\infty,0).$

We have two ways to pull back the Fubini-Study form to $\mathcal{X}$, via $\mu_1$ or $\mu_2$, and we want to describe how they are related. For this let $f$ denote the function on $\mathcal{X}$ defined as $$f:=(\ln(|\tau|^2+|z|^2)-\ln(1+|z|^2))\circ \mu_2.$$ Then a simple calculation shows that $$dd^c f=\mu_1^*(\pi^*\omega_{FS})-(\mu_2^*(\pi^*\omega_{FS})-[E_2]),$$ where $[E_2]$ denotes the current of integration along $E_2.$ This then shows that $$\Phi\circ \mu_1 \circ \mu_2^{-1}+\ln(|\tau|^2+|z|^2)-\ln(1+|z|^2)$$ is a candidate for the envelope defining $\tilde{\Phi},$ while reversely $$\tilde{\Phi}\circ \mu_2 \circ \mu_1^{-1}+f \circ \mu_1^{-1}$$ is a candidate for the envelope defining $\Phi.$ It follows that $$\Phi\circ \mu_1+f=\tilde{\Phi}\circ \mu_2$$ and we simply get the formula of the Proposition by first taking $\mu_1^{-1}.$
\end{proof}

\begin{remark}
  From the above we see that
$$ \Phi(z,\tau) - \tilde{\Phi}(\tau z,\tau) = \ln \left(\frac{1 + |\tau z|^2}{|\tau|^2(1+|z|^2)}\right)$$
which is smooth on $\mathbb P^1\times \overline{\mathbb D}^{\times}$.  Hence as $\Phi$ is $C^{1,1}$ on $\mathbb P^1\times \overline{\mathbb{D}}^{\times}$, the same is true of $\tilde{\Phi}$.  
\end{remark}

\begin{lemma}
\begin{equation} \label{phiinequality}
\tilde{\Phi}(z,\tau)\leq \ln(|\tau|^2+|z|^2)- \ln(1+|z|^2) + \max(\phi).
\end{equation}
\end{lemma}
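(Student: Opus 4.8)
The plan is to prove the stated bound on $\tilde\Phi$ by bounding every competitor $\psi$ in the envelope defining $\tilde\Phi$, since the supremum of functions each bounded by the right-hand side is again bounded by it. So fix a usc $\psi$ with $\pi^*\omega_{FS}+dd^c\psi\ge 0$, with $\psi(z,\tau)\le\phi(z)$ on $\mathbb P^1\times\partial\mathbb D$ and $\nu_{(0,0)}(\psi)\ge 1$, and show $\psi(z,\tau)\le \ln(|\tau|^2+|z|^2)-\ln(1+|z|^2)+\max(\phi)$. Working in the affine chart $\mathbb C_z\times\overline{\mathbb D}_\tau$, I set $u:=\psi+\ln(1+|z|^2)$ and $v:=\ln(|z|^2+|\tau|^2)+\max(\phi)$; since $\pi^*\omega_{FS}=dd^c\ln(1+|z|^2)$ there, the hypothesis says $u$ is psh, and the claim becomes $u\le v$.

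The key observation, and what makes the argument work, is that $v$ is harmonic along complex lines through the origin: restricting to $s\mapsto(sa,sb)$ gives $v(sa,sb)=\ln|s|^2+\ln(|a|^2+|b|^2)+\max(\phi)$, which is harmonic in $s$ away from $s=0$. Hence when $b\ne 0$ the function $g(s):=u(sa,sb)-v(sa,sb)$ is subharmonic in $s$ (a subharmonic function minus a harmonic one) on the punctured disc $\{0<|s|<1/|b|\}$. I would then collect two boundary estimates. On the circle $|s|=1/|b|$ the point $(sa,sb)$ lies on the torus $|\tau|=1$, where $u=\psi+\ln(1+|z|^2)\le\phi(z)+\ln(1+|z|^2)\le\max(\phi)+\ln(1+|z|^2)=v$, so $g\le 0$ there. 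Near $s=0$, the Lelong condition $\nu_{(0,0)}(\psi)\ge 1$ gives $\psi\le\ln(|z|^2+|\tau|^2)+C$, hence $u\le v+C$, so $g$ is bounded above; a subharmonic function on a punctured disc that is bounded above extends subharmonically across the puncture. The maximum principle on the full disc $\{|s|\le 1/|b|\}$ then yields $g\le 0$, i.e. $u\le v$ along every such line, which covers all points with $\tau\ne 0$.

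It remains to treat the two degenerate loci not swept out by these lines, namely $\{\tau=0\}$ and $\{z=\infty\}$. For fixed $z_0\ne 0,\infty$ the function $\psi(z_0,\cdot)$ is subharmonic in $\tau$ (the restriction of $\pi^*\omega_{FS}$ to the fibre vanishes), and $h:=v-\ln(1+|z|^2)$ is continuous there; applying the sub-mean value inequality over small circles $|\tau|=r$, on which $\psi\le h$ is already known, and letting $r\to 0$ gives $\psi(z_0,0)\le h(z_0,0)$. An entirely analogous sub-mean value argument in the chart $w=1/z$, where the relevant functions and $h$ are again continuous, extends the inequality to $z=\infty$, and the remaining single point $(0,0)$ is immediate since both sides tend to $-\infty$. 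Taking the supremum over all competitors $\psi$ then gives the Lemma.

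The main obstacle is the behaviour at the puncture $(0,0)$: a naive comparison of the psh functions $u$ and $v$ on a slit domain only yields $u\le v+C$, with the constant inherited from the Lelong estimate, which is too weak for the sharp bound. What removes the constant is precisely the harmonicity of $v$ along the lines through the origin together with the matching of logarithmic singularities, since $\nu_{(0,0)}(\psi)\ge 1$ exactly cancels the singularity of $v$; this turns $g$ into a genuine subharmonic function with a removable singularity, so the maximum principle applies along each line with no loss.
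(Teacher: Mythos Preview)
Your argument is correct, but it takes a genuinely different route from the paper's two-line proof. The paper observes that when $\phi\equiv 0$ the solution $\Phi$ is identically zero, and then reads off from Proposition~\ref{propequiv} that in this case $\tilde\Phi(z,\tau)=\ln(|\tau|^2+|z|^2)-\ln(1+|z|^2)$ exactly; the general inequality follows at once from the obvious monotonicity $\tilde\Phi^{\phi}\le\tilde\Phi^{\max(\phi)}=\tilde\Phi^{0}+\max(\phi)$. By contrast, you bypass Proposition~\ref{propequiv} entirely and give a direct maximum-principle argument: slice $\mathbb C_z\times\overline{\mathbb D}_\tau$ by complex lines through the origin, note that the comparison function $v=\ln(|z|^2+|\tau|^2)+\max(\phi)$ is harmonic along each such line, use the Lelong-number hypothesis to remove the singularity of $u-v$ at the centre, and apply the maximum principle with the boundary condition on $|\tau|=1$. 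The paper's approach is much shorter and exploits machinery already in place; yours is self-contained and makes transparent that the right-hand side is precisely the extremal $\pi^*\omega_{FS}$-psh function with Lelong number one at $(0,0)$ and boundary value $\max(\phi)$, which is why it dominates every competitor. Your treatment of the residual loci $\{\tau=0\}$ and $\{z=\infty\}$ via sub-mean-value inequalities is also fine.
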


\begin{proof}
First assume that $\phi$ is identically zero. Then clearly $\Phi$ is also identically zero so from Proposition \ref{propequiv} we see that $$\tilde{\Phi}(z,\tau)=\ln(|\tau|^2+|z|^2)- \ln(1+|z|^2).$$ The lemma then follows from the obvious monotonicity property of $\tilde{\Phi}.$
\end{proof}

Now we wish to connect $\tilde{\Phi}$ (and hence $\Phi$) with the Hele-Shaw flow of $\phi$.  For any $t\in \mathbb{R}$ we let 
\begin{equation}
\psi_t = \sup\{ \psi: \psi \text{ is usc and } \omega_{\psi}\ge 0 \text{ and } \psi\le \phi \text{ and } \nu_0(\psi)\ge t\}.
\end{equation}
Note that for $t<0,$ $\psi_t=\phi$ while if $t>1$ we get that $\psi_t\equiv -\infty.$ For $t\in [0,1]$ we recognize $\psi_t$ as the envelopes that contain the same data as the Hele-Shaw flow $\Omega_t = \{ \psi_t<\phi\}$. 

We saw in Section \ref{Sec:HS} that for a fixed $z$ the function $\psi_t(z)$ is concave in $t.$ On the other hand, the function $\tilde{\Phi}(z,e^{-s/2})$ is subharmonic and independent of the imaginary part of $s$ and thus $\tilde{\Phi}(z,e^{-s/2})$ is convex in $s$ (where we now think of $s$ as a real variable taking values in $[0,\infty)$). We can then define $\tilde{\Phi}(z,e^{-s/2})$ to be $+\infty$ for $s<0$ to get a convex function defined on the whole of $\mathbb{R}.$ 

Now we recall the definition of the (one-variable) Legendre transform (or convex conjugate).

\begin{definition}
Given a function $u: \mathbb{R} \to \mathbb{R}\cup \{+\infty\}$ the Legendre transform $\hat{u}: \mathbb{R} \to \mathbb{R}\cup \{+\infty\}$ is the convex function defined as $$\hat{u}(y):=\sup_{x\in \mathbb{R}^n}\{xy-u(x)\}.$$
\end{definition}

The Fenchel-Moreau Theorem (see e.g. \cite{Rockafellar}) now asserts that the Legendre transform is an involution (i.e. $\hat{\hat{u}}=u$) precisely on the set of convex lower semicontinuous functions.

The next Theorem says that $u(s):=\tilde{\Phi}(z,e^{-s/2})+s$ is the Legendre transform of $-\psi_t(z)$ and vice versa.

\begin{theorem}
\begin{equation} \label{legendre1}
\psi_t(z)=\inf_{|\tau|>0}\{\tilde{\Phi}(z,\tau)-(1-t)\ln|\tau|^2\}
\end{equation} 
and 
\begin{equation} \label{legendre2}
\tilde{\Phi}(z,\tau)=\sup_{t}\{\psi_{t}(z)+(1-t)\ln |\tau|^2\}.
\end{equation}
\end{theorem}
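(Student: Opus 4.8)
The plan is to read \eqref{legendre1} and \eqref{legendre2} as a Legendre-dual pair and to prove them by establishing \eqref{legendre1} directly via a two-sided envelope argument, then deducing \eqref{legendre2} from the Fenchel--Moreau theorem \cite{Rockafellar}. Fix $z\in \mathbb P^1\setminus\{0\}$ (the value $z=0$ is handled by a limit, since $\psi_t(0)=-\infty$ for $t>0$), write $s=-\ln|\tau|^2\in[0,\infty)$, and set $v(s):=\tilde\Phi(z,e^{-s/2})$ and $u(s):=v(s)+s$. As observed before the statement, $v$ is convex in $s$, so $u$ is convex on $[0,\infty)$; extending $u$ by $+\infty$ for $s<0$ (as was done for $\tilde\Phi$) makes it a proper convex \emph{lower} semicontinuous function on $\mathbb R$, continuity on $[0,\infty)$ coming from the continuity of $\tilde\Phi$. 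Dually $w(t):=-\psi_t(z)$ is convex, proper and lower semicontinuous by the concavity and continuity of $\psi_t$ in $t$ proved in Section~\ref{Sec:HS}. In this language \eqref{legendre1} asserts $\hat u=w$ and \eqref{legendre2} asserts $\hat w=u$, and these are equivalent via Fenchel--Moreau once $u$ is known to be convex and lsc.

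The first (easy) inequality is $\tilde\Phi(z,\tau)\ge \psi_t(z)+(1-t)\ln|\tau|^2$ for every $t\in[0,1]$, which I would prove by exhibiting the right-hand side as a competitor in the envelope defining $\tilde\Phi$. The three conditions to check are: (i) $\pi^*\omega_{FS}+dd^c$ of this function is $\ge 0$ as a current, which holds since $\omega_{\psi_t}\ge 0$ on $\mathbb P^1$ and $dd^c\big((1-t)\ln|\tau|^2\big)=(1-t)[\{\tau=0\}]\ge 0$ because $1-t\ge0$; (ii) on $\partial\mathbb D$ one has $\ln|\tau|^2=0$, so the function equals $\psi_t(z)\le\phi(z)$; and (iii) the Lelong number at $(0,0)$ is at least $1$, since near the origin $\psi_t(z)+(1-t)\ln|\tau|^2\le t\ln|z|^2+(1-t)\ln|\tau|^2+C\le \ln(|z|^2+|\tau|^2)+C$. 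Hence the function is a candidate for $\tilde\Phi$, giving the stated inequality; rearranging shows $\psi_t(z)\le \inf_{|\tau|>0}\{\tilde\Phi(z,\tau)-(1-t)\ln|\tau|^2\}=:g_t(z)$ and simultaneously $\tilde\Phi\ge\sup_t\{\psi_t+(1-t)\ln|\tau|^2\}$.

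The reverse inequality $g_t\le\psi_t$ is the crux, and here the main tool is \emph{Kiselman's minimum principle}. First one observes that $\tilde\Phi$ is invariant under $\tau\mapsto e^{i\theta}\tau$: the data defining its envelope (the obstacle $\phi(z)$, independent of $\tau$, and the Lelong condition at $(0,0)$) are rotation invariant, so by uniqueness of the envelope $\tilde\Phi$ is too. Consequently $H_t(z,\tau):=\tilde\Phi(z,\tau)-(1-t)\ln|\tau|^2+\ln(1+|z|^2)$ is genuinely plurisubharmonic on $\mathbb C\times\overline{\mathbb D}^\times$, is independent of $\arg\tau$, and is convex in $s$; Kiselman's minimum principle then gives that $\inf_\tau H_t$, and hence $g_t$, is $\omega_{FS}$-subharmonic in $z$. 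Being an infimum of functions continuous in $z$ it is usc; it satisfies $g_t\le\phi$ upon evaluating at $|\tau|=1$ (where $\tilde\Phi(z,\tau)\le\phi(z)$); and choosing $|\tau|=|z|$ in the bound \eqref{phiinequality} yields $g_t(z)\le t\ln|z|^2+C$ near the origin, so $\nu_0(g_t)\ge t$. Thus $g_t$ is a competitor in the envelope \eqref{eq:envelope}, whence $g_t\le\psi_t$. Combined with the previous paragraph this gives $\psi_t=g_t$, which is precisely \eqref{legendre1}, i.e. $\hat u=w$. Finally, since $u$ is convex and lsc, Fenchel--Moreau gives $\hat w=\hat{\hat u}=u$, and unwinding the definitions turns this into \eqref{legendre2}. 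I expect the application of Kiselman's principle, together with the bookkeeping of the positivity of currents at the singular set $\{\tau=0\}\cup\{z=0\}$ and the verification of the $S^1$-invariance, to be the delicate part; the remaining steps are routine envelope and convexity manipulations.
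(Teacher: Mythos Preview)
Your proposal is correct and follows essentially the same route as the paper: competitor argument for the easy inequality, Kiselman's minimum principle together with the bound \eqref{phiinequality} for the reverse inequality in \eqref{legendre1}, and then Fenchel--Moreau to deduce \eqref{legendre2}. The only cosmetic difference is that the paper computes $\inf_{s\ge0}\{\ln(e^{-s}+|z|^2)+(1-t)s\}$ exactly (and treats $t=0,1$ separately) to read off the Lelong number, whereas you simply plug in $|\tau|=|z|$, which yields the same $t\ln|z|^2+C$ bound more directly.
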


\begin{proof}
 First we note that $$\psi_{t}(z)+(1-t)\ln |\tau|^2$$ is a candidate for the envelope defining $\tilde{\Phi}$ so by definition we get that the LHS is less than or equal to the RHS in (\ref{legendre1}). Since $\tilde{\Phi}(z,\tau)$ is independent of the argument of $\tau$ it follows from Kiselman's minimum principle (see \cite{Kiselman}) that the RHS defines an $\omega_{FS}$-sh function on $\mathbb{P}^1,$ which we will denote by $\tilde{\psi}_{t}.$ Clearly $\tilde{\psi}_{t}\leq \phi$, so if we can show that it has logarithmic singularity of order at least $t$ at the origin, then it would follow from the definition of $\psi_{t}$ as the supremum of all such functions that $\tilde{\psi}_{t}\leq \psi_{t}$. Since for a fixed $z$  the function $\tilde{\Phi}(z,e^{-s/2})$ is subharmonic and independent of the imaginary part of $s$ we get that $\tilde{\Phi}(z,e^{-s/2})$ is convex in $s$ (where we now think of $s$ as a real variable taking values in $[0,\infty)$). Thus for a fixed $z$ $$-\tilde{\psi}_t(z)=\inf_{s\geq 0}\{\tilde{\Phi}(z,e^{-s/2})+(1-t)s\}=\sup_{s \in \mathbb{R}}\{ts-(\tilde{\Phi}(z,e^{-s/2})+s)\}$$ is the Legendre transform of a convex function. Using the inequality (\ref{phiinequality}) we get that $$\tilde{\psi}_{t}(z)\leq \inf_{s\geq 0}\{\ln (e^{-s}+|z|^2)+(1-t)s\}-\ln(1+|z|^2)+\max(\phi).$$ 
By elementary means one easily checks that if $t\in (0,1)$ then
$$\inf_{s\geq 0}\{\ln(e^{-s} + |z|^2)+(1-t)s\}=t(\ln|z|^2-\ln t)-(1-t)\ln (1-t)$$ which shows that indeed $\tilde{\psi}_{t}$ has a logarithmic singularity of order $t,$ at least when $t\in (0,1).$  For $t=0$ one notes that $\phi(z)+\ln |\tau|^2$ is a candidate for the envelope defining $\tilde{\Phi}$, so $\phi(z) + \ln |\tau|^2\le \tilde{\Phi}$, which implies that $\tilde{\psi}_{0}=\phi$.  For $t=1$, the fact that $$\tilde{\psi}_1(z)\leq \tilde{\Phi}(z,0)$$ implies that it has the right singularity as well. The case that $t<0$ and $t>1$ are immediate, so this proves (\ref{legendre1}).

We thus see that $$-\psi_t(z)=\sup_{s \in \mathbb{R}}\{ts-(\tilde{\Phi}(z,e^{-s/2})+s)\},$$ i.e. that $-\psi_t(z)$ is the Legendre transform of $u(s):=\tilde{\Phi}(z,e^{-s/2})+s.$ We also know that $u(s)$ is convex and lower semicontinuous (since it is continuous on $[0,\infty)$ and constantly $-\infty$ on $(-\infty,0)$) so by the Fenchel-Moreau Theorem we get that $u(s)$ is the Legendre transform of $-\psi_t(z).$ This is exactly what is asserted in (\ref{legendre2}). 
\end{proof}

\subsection{The Hamiltonian}

We will have use for the function $H$ on $\mathbb{P}^1\times \overline{\mathbb{D}}^{\times}$ defined as $$H(z,\tau):=\frac{\partial}{\partial s} \tilde{\Phi}(z,e^{-s/2}),$$ where $s:=-\ln |\tau|^2$ (when $|\tau|=1$ and thus $s=0$ we take the right derivative). As $\tilde{\Phi}$ is $C^{1,1}$ on $\mathbb{P}^1\times \overline{\mathbb{D}}^{\times}$ the function $H$ is well-defined and continuous (even Lipschitz but we will not need this). 

\begin{proposition}\label{prop:Hu_t}
$$H(z,1)+1=\sup\{t: \psi_{t}(z)=\phi(z)\} = \sup\{ t: z\notin \Omega_t\}.$$
\end{proposition}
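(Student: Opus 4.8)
The plan is to reduce the whole statement to the one-variable convex duality already established in the Theorem. The second equality is essentially definitional: since $\psi_t\le\phi$ always holds, the definition $\Omega_t=\{\psi_t<\phi\}$ in \eqref{eq:heleshawdef} gives $z\notin\Omega_t\iff\psi_t(z)\ge\phi(z)\iff\psi_t(z)=\phi(z)$, so the sets $\{t:\psi_t(z)=\phi(z)\}$ and $\{t:z\notin\Omega_t\}$ coincide and have the same supremum. Thus all the content lies in the first equality, and my plan is to recognize $H(z,1)+1$ as a one-sided derivative of a Legendre transform.

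Fix $z$ and set $u(s):=\tilde{\Phi}(z,e^{-s/2})+s$. Writing \eqref{legendre2} with $s=-\ln|\tau|^2$ gives $u(s)=\sup_t\{\psi_t(z)+ts\}$, so $u$ is exactly the Legendre transform of the convex function $g(t):=-\psi_t(z)$ (convex because $\psi_t(z)$ is concave in $t$); this is already the content of the Theorem, where $u$ and $g$ are shown to be mutually conjugate closed convex functions. Straight from the definition of $H$ one has $H(z,\tau)=\tfrac{\partial}{\partial s}\tilde{\Phi}(z,e^{-s/2})=u'(s)-1$, and since at $\tau=1$ (i.e.\ $s=0$) the defining derivative is the right derivative, this yields $H(z,1)+1=u'_+(0)$. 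It therefore remains to prove $u'_+(0)=\sup\{t:\psi_t(z)=\phi(z)\}$.

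For this I would invoke the Fenchel--Young equality. First, $u(0)=\sup_t\{\psi_t(z)\}=\phi(z)$, the last step because $\psi_t(z)$ is decreasing in $t$ with $\psi_t=\phi$ for $t<0$. The Fenchel--Young equality $u(0)+g(t)\ge 0$, with equality iff $t\in\partial u(0)$, then identifies $\partial u(0)=\{t:g(t)=-u(0)\}=\{t:\psi_t(z)=\phi(z)\}$. Setting $t^*:=\sup\{t:\psi_t(z)=\phi(z)\}$, the concavity, monotonicity and continuity of $t\mapsto\psi_t(z)$ established in Section~\ref{Sec:HS} show that this set is precisely the closed ray $(-\infty,t^*]$, so $\sup\partial u(0)=t^*$. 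Finally, because $u\equiv+\infty$ on $(-\infty,0)$ one checks that $\sup\partial u(0)=u'_+(0)$ even though $s=0$ is the left endpoint of $\mathrm{dom}\,u$, and combining these gives $u'_+(0)=t^*$, as required.

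The identification of $u$ as a conjugate and the Fenchel--Young computation are routine. The step I expect to demand the most care is the boundary behaviour at $s=0$: one must justify that the right derivative $u'_+(0)$ equals $\sup\partial u(0)$ despite $0$ being only the endpoint, not an interior point, of the effective domain of $u$, and that the maximizing set for $u(0)$ is exactly the closed ray $(-\infty,t^*]$ rather than merely containing $t^*$. Both follow from convexity together with the monotonicity and continuity of $\psi_t(z)$ in $t$, but these are the only genuinely non-formal inputs to the argument.
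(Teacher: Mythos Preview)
Your argument is correct. Both your proof and the paper's rest on the Legendre duality \eqref{legendre2}, but the packaging differs. The paper argues by two direct inequalities: if $\psi_t(z)=\phi(z)$ then \eqref{legendre2} gives $\tilde\Phi(z,e^{-s/2})\ge(t-1)s+\phi(z)$, forcing $H(z,1)\ge t-1$; and if $\psi_t(z)-\phi(z)=a<0$ then, splitting the supremum in \eqref{legendre2} at $t$ and using monotonicity, one gets $\tilde\Phi(z,e^{-s/2})-\phi(z)\le\max((t-1)s,a)$, forcing $H(z,1)\le t-1$. You instead invoke the Fenchel--Young equality to identify $\partial u(0)=\{t:\psi_t(z)=\phi(z)\}$ and then the one-variable fact $\sup\partial u(0)=u'_+(0)$ at a left endpoint of the domain. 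Your route is cleaner conceptually and makes the role of convex duality explicit; the paper's is more self-contained since it avoids citing subdifferential theory. The ``care'' you flag at $s=0$ is exactly the content of the paper's upper-bound inequality, so the two arguments meet at the same technical point.
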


\begin{proof}
From (\ref{legendre2}) we see that if $\psi_{t}(z)=\phi(z)$ then $$\tilde{\Phi}(z,e^{-s/2})\geq (t-1)s+\phi(z)$$ and thus $$H(z,1)\geq\sup\{t: \psi_{t}(z)=\phi(z)\}-1.$$ Recall that for a fixed $z$ the function $\psi_{t}(z)$ is concave and decreasing in $t.$ Thus if $\psi_{t}(z)\leq a<0$ then it follows from (\ref{legendre2}) that $$\tilde{\Phi}(z,e^{-s/2})\leq \max( (t-1)s,-a)+\phi(z)$$ and so $H(z,1)\leq t-1,$ which proves the proposition.
\end{proof}

\begin{proposition} \label{prop:ham2}
For $0<|\tau|<1$ we have that $$H(z,\tau)=t-1 \qquad \Longleftrightarrow \qquad \tilde{\Phi}(z,\tau)=\psi_t(z)-(1-t)\ln |\tau|^2.$$
\end{proposition}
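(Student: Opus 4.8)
The plan is to read this proposition as the equality case in the Young--Fenchel inequality for the Legendre-dual pair constructed in the previous theorem. I would fix $z$ and work in the variable $s:=-\ln|\tau|^2$, so that the hypothesis $0<|\tau|<1$ corresponds to $s>0$. Setting $u(s):=\tilde{\Phi}(z,e^{-s/2})+s$, the previous theorem already establishes that $u$ is convex and lower semicontinuous with $\hat u(t)=-\psi_t(z)=\sup_s\{ts-u(s)\}$, and that $u=\hat{\hat u}$ by Fenchel--Moreau. The starting point is then the Young--Fenchel inequality $u(s)+\hat u(t)\ge st$, valid for all real $s,t$, together with its equality condition: equality holds at $(s,t)$ if and only if $t\in\partial u(s)$.

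The essential input is the regularity noted after Proposition \ref{propequiv}, namely that $\tilde{\Phi}$ is $C^{1,1}$ on $\mathbb{P}^1\times\overline{\mathbb{D}}^{\times}$. This guarantees that for $s>0$ the map $s\mapsto\tilde{\Phi}(z,e^{-s/2})$ is genuinely differentiable and that $u'(s)=H(z,\tau)+1$ is a two-sided derivative. Consequently $\partial u(s)=\{u'(s)\}$ is a singleton, so the equality condition $t\in\partial u(s)$ collapses to $t=u'(s)$, i.e.\ to $H(z,\tau)=t-1$. This is precisely where I expect to use $|\tau|<1$: at the boundary $|\tau|=1$ one only has a one-sided derivative and $\partial u(s)$ may be an interval, which is exactly why the boundary case was handled separately, through a supremum, in Proposition \ref{prop:Hu_t}.

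It then remains to unwind the equality $u(s)+\hat u(t)=st$ into the stated identity. Substituting the definitions gives $\tilde{\Phi}(z,e^{-s/2})+s-\psi_t(z)=st$, that is $\tilde{\Phi}(z,e^{-s/2})=\psi_t(z)+(t-1)s$; since $(t-1)s=(1-t)\ln|\tau|^2$, this is exactly the assertion that the supremum in \eqref{legendre2} is attained at the value $t$. Reading the Young--Fenchel equivalence in both directions delivers the biconditional. I regard the argument as essentially routine once the Legendre duality of the previous theorem is in hand; the one point genuinely requiring care — and the only plausible obstacle — is the appeal to $C^{1,1}$ regularity to force $\partial u(s)$ to be a single point, without which the equality case of Young--Fenchel would give only a one-sided implication rather than the claimed ``if and only if''.
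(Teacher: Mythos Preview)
Your argument is correct and is, at heart, the same argument as the paper's, just expressed in the language of convex duality. The paper proceeds concretely: it uses the continuity of $t\mapsto\psi_t(z)$ to see that the supremum in \eqref{legendre2} is attained at some $t_0$, and then observes that since $s\mapsto\tilde\Phi(z,e^{-s/2})$ lies above the affine function $s\mapsto\psi_{t_0}(z)-(1-t_0)s$ and touches it at $s_0=-\ln|\tau_0|^2$, the derivative there must equal $t_0-1$; the biconditional then follows because $H$ is single-valued. Your Young--Fenchel formulation packages exactly this tangent-line reasoning: the statement ``$t\in\partial u(s)$ with $\partial u(s)=\{u'(s)\}$ because $u$ is differentiable'' is precisely the paper's touching argument, and your observation that differentiability forces the subdifferential to be a singleton is what replaces the paper's appeal to continuity of $\psi_t$ for the reverse implication. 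Both routes ultimately rest on the $C^{1,1}$ regularity of $\tilde\Phi$ away from $\tau=0$, which you correctly identify as the decisive point.
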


\begin{proof}
Fix a point $(z_0,\tau_0),$  $0<|\tau_0|<1$. From (\ref{legendre2}) we see that $$\tilde{\Phi}(z_0,\tau_0)=\sup_{t\in [0,1]}\{\psi_t(z_0)+(1-t)\ln |\tau_0|^2\}.$$ Since $\psi_t(z_0)$ is continuous in $t$ we must have that $\tilde{\Phi}(z_0,\tau_0)=\psi_{t_0}(z_0)+(1-t_0)\ln |\tau_0|^2$ for some $t_0\in [0,1].$ Since we always have that $$\tilde{\Phi}(z,e^{-s/2})\geq \psi_{t_0}(z)-(1-t_0)s$$ it follows that $$H(z_0,\tau_0)=\frac{\partial}{\partial s}_{|s=-\ln|\tau_0|^2}(\psi_{t_0}(z)-(1-t_0)s)=t_0-1.$$
\end{proof}

\section{Harmonic discs}

As above, let $\Phi$ be the weak solution to the HMAE with boundary data $\phi(z,\tau) = \phi(\rho(\tau)z)$. Recall that if $g\colon \mathbb D\to \mathbb P^1$ is holomorphic then we say the graph of $g$ is a harmonic disc if $\Phi$ is $\pi^*\omega_{FS}$-harmonic along this graph.

\begin{theorem} \label{thm:holdiscs}
Let $g\colon \mathbb D\to \mathbb P^1$ be holomorphic.  Then the graph of $g$ is a harmonic disc of $\Phi$ if and only if either (1) $g\equiv 0$ or (2) $g(\tau)=\tau^{-1}z$ where $z\in \Omega_1^c$ or (3) $\tau \mapsto \tau g(\tau)$ is a Riemann mapping for a simply connected Hele-Shaw domain $\Omega_t$ that maps $0\in \mathbb D$ to $0\in \Omega_t$.  The function $H$ is constant along the associated discs $\{(\tau g(\tau),\tau)\}$, in the first case $H= -1,$ in the second case $H= 0$ while in the third case $H= t-1.$
\end{theorem}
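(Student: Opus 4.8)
The plan is to transport the problem to the geodesic ray $\tilde\Phi$ via Proposition \ref{propequiv} and then to read the harmonic discs off the current formula of Proposition \ref{prop:basicHS}. Writing $h(\tau):=\tau g(\tau)$, the biholomorphism $(z,\tau)\mapsto(\tau z,\tau)$ carries the graph of $g$ to the disc $\{(h(\tau),\tau)\}$. I would first dispose of $g\equiv 0$ by hand, checking that $\tilde\Phi(0,\tau)=\phi(0)+\ln|\tau|^2$ and hence $\Phi(0,\tau)\equiv\phi(0)$, so $(1)$ is harmonic with $H\equiv -1$. For $g$ finite and not identically zero, differentiating the identity of Proposition \ref{propequiv} along the disc, in which the smooth terms $\ln(1+|h|^2)$ and $\ln(1+|g|^2)$ contribute $h^*\omega_{FS}$ and $-g^*\omega_{FS}$ while $dd^c_\tau\ln|\tau|^2=\delta_0$, shows (after cancelling $g^*\omega_{FS}$ against the pull-back of $\pi^*\omega_{FS}$) that the graph of $g$ is a harmonic disc if and only if
$$dd^c_\tau[\tilde\Phi(h(\tau),\tau)]+h^*\omega_{FS}=\delta_0 .$$

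For sufficiency in case $(3)$ I would argue by comparison. Let $h$ be the Riemann map of a simply connected $\Omega_{t_0}$ with $h(0)=0$, and set $A(\tau):=\psi_{t_0}(h(\tau))+(1-t_0)\ln|\tau|^2$. By (\ref{legendre2}), $A$ is a candidate in the supremum defining $\tilde\Phi$, so $\tilde\Phi(h(\tau),\tau)\ge A(\tau)$; on the other hand both sides equal $\phi(h(\tau))$ on $|\tau|=1$, since there $h(\tau)\in\partial\Omega_{t_0}$ and $\psi_{t_0}=\phi$ off $\Omega_{t_0}$. Using Proposition \ref{prop:basicHS}$(4)$, namely that $\omega_{\psi_{t_0}}=t_0\delta_0$ on $\Omega_{t_0}$, together with the simple zero of $h$ at the origin, one computes $dd^c_\tau A=-h^*\omega_{FS}+\delta_0$, while positivity of $\pi^*\omega_{FS}+dd^c\tilde\Phi$ gives $dd^c_\tau[\tilde\Phi(h,\tau)]\ge -h^*\omega_{FS}+\delta_0$. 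Hence $R:=\tilde\Phi(h,\cdot)-A$ is subharmonic, nonnegative, and vanishes on $\partial\mathbb D$, so $R\equiv 0$ by the maximum principle; equality then forces $dd^c_\tau[\tilde\Phi(h,\tau)]+h^*\omega_{FS}=\delta_0$, which is harmonicity, and $H\equiv t_0-1$. Case $(2)$ is checked in the same spirit: for $z\in\Omega_1^c$ one shows $\tilde\Phi(z,\tau)\equiv\phi(z)$, whence the disc $g(\tau)=\tau^{-1}z$ is harmonic with $H\equiv 0$.

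For necessity I would first invoke that $H$ is constant along any harmonic disc; this is the autonomy of the geodesic ray, $\tilde\Phi$ being $S^1$-invariant in $\tau$ with $\tau$-independent boundary data, so that $H=\partial_s\tilde\Phi$ is a conserved Hamiltonian of the Monge--Amp\`ere foliation. Calling this constant $t_0-1$, attainment of the supremum in (\ref{legendre2}) (equivalently Proposition \ref{prop:ham2}) gives $\tilde\Phi(h(\tau),\tau)=\psi_{t_0}(h(\tau))+(1-t_0)\ln|\tau|^2$ along the disc; substituting into the displayed harmonicity equation and writing $\omega_{\psi_{t_0}}=\omega_{FS}+dd^c\psi_{t_0}$ collapses it to $h^*\omega_{\psi_{t_0}}=t_0\delta_0$. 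Since $\omega_{\psi_{t_0}}=(1-\chi_{\Omega_{t_0}})\omega_\phi+t_0\delta_0$ with $\omega_\phi>0$, the part supported off $\Omega_{t_0}$ must pull back to zero, forcing $h(\mathbb D)\subseteq\overline{\Omega_{t_0}}$, while the $\delta_0$-part forces $h$ to have a single simple zero over $0$. If $h$ is nonconstant it is therefore an open map into $\Omega_{t_0}$ that, by Proposition \ref{prop:Hu_t} applied on $|\tau|=1$ (where $H=t_0-1$ puts $h(\tau)$ on $\partial\Omega_{t_0}$), is proper; a proper degree-one holomorphic map onto a domain is a biholomorphism, so $\Omega_{t_0}$ is simply connected and $h$ is its Riemann map, which is $(3)$. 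If $h$ is constant then $g\equiv0$ or $g=\tau^{-1}z$, and Proposition \ref{prop:Hu_t} on $|\tau|=1$ forces $z\in\Omega_1^c$, giving $(1)$ or $(2)$; a pole of $g$ away from the origin is excluded because it would make $\{(h(\tau),\tau)\}$ meet $\{\infty\}$ at an interior parameter, incompatible (via the $z\mapsto 1/z$ chart and the same current computation) with $h(\mathbb D)\subseteq\overline{\Omega_{t_0}}$.

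The step I expect to be the main obstacle is the rigorous constancy of $H$ together with the bookkeeping at the singular point $(0,0)$: the discs of type $(1)$ and $(3)$ pass through the Lelong singularity of $\tilde\Phi$, so both the current identity and the conservation of $H$ must be justified across this point, which is cleanest on the blow-up $\mathcal X$ of Proposition \ref{propequiv} where the singularity is resolved along $E_2$. The second delicate point is the properness of $h$ up to the boundary, needed before the Riemann mapping theorem can be applied, which rests on the boundary identity of Proposition \ref{prop:Hu_t}.
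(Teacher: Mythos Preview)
Your overall architecture---pass to $\tilde\Phi$ via Proposition~\ref{propequiv}, then read off the discs from the structure of $\psi_t$---is the paper's, and your sufficiency arguments for (1)--(3) match it almost verbatim. The current identity $dd^c_\tau[\tilde\Phi(h(\tau),\tau)]+h^*\omega_{FS}=\delta_0$ and its consequence $h^*\omega_{\psi_{t_0}}=t_0\delta_0$ are a clean repackaging of what the paper does.

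The genuine gap is exactly where you flag it, but your proposed fix misses the point. The obstruction to ``$H$ is a conserved Hamiltonian'' is not the Lelong singularity at $(0,0)$---it is that $\tilde\Phi$ is only $C^{1,1}$, so the usual moment-map argument for constancy along leaves of the Monge--Amp\`ere foliation is unavailable. Passing to the blow-up $\mathcal X$ resolves the point singularity but does nothing for this global regularity issue. The paper sidesteps the problem entirely by reversing your order of operations: it does \emph{not} first prove $H$ constant and then invoke Proposition~\ref{prop:ham2}. Instead it fixes a single interior point $\tau_0$, sets $t_0:=H(f(\tau_0),\tau_0)+1$, and observes that
\[
\psi_{t_0}(f(\tau))+(1-t_0)\ln|\tau|^2-\tilde\Phi(f(\tau),\tau)
\]
is subharmonic on the disc (because the assumed harmonicity of the disc makes $\tilde\Phi(f(\tau),\tau)$ $\omega_{FS}$-harmonic, while the first two terms are $\omega_{FS}$-subharmonic), is $\le 0$ by the Legendre inequality~(\ref{legendre2}), and vanishes at $\tau_0$ by Proposition~\ref{prop:ham2}. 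The strong maximum principle then forces the identity $\tilde\Phi(f(\tau),\tau)=\psi_{t_0}(f(\tau))+(1-t_0)\ln|\tau|^2$ on the whole disc. Constancy of $H$ is now a \emph{consequence}, not an input.

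Your properness step has the same defect. You want to apply Proposition~\ref{prop:Hu_t} on $|\tau|=1$, but $h$ is a priori only defined on the open disc and need not extend continuously to $\partial\mathbb D$. The paper instead uses the identity just established together with continuity of $\tilde\Phi$ up to $|\tau|=1$: letting $|\tau_i|\to 1$ gives $\psi_{t_0}(f(\tau_i))-\phi(f(\tau_i))\to 0$, so $f(\tau_i)$ escapes every compact $\{\psi_{t_0}\le\phi-1/n\}\subset\Omega_{t_0}$, which is properness without any boundary extension of $h$. After that, your ``proper degree-one map is a biholomorphism'' is exactly the paper's Lemma~\ref{lem:proper} plus the Lelong-number count $N_0=1$.
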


\begin{lemma} \label{lem:proper}
If $f\colon D_1\to D_2$ is a proper holomorphic map between two open domains in $\mathbb{P}^1$ then the number of preimages $N_p:=\#\{f^{-1}(p)\}$ (counted with multiplicity) is constant. 
\end{lemma}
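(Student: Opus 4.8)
The plan is to show that the function $p \mapsto N_p$ is locally constant on $D_2$; since $D_2$ is connected this forces it to be globally constant. The facts I would invoke are the standard local theory of holomorphic maps between Riemann surfaces. Because $f$ is holomorphic and (being proper between honest subdomains of $\mathbb{P}^1$) non-constant, it is an open map with discrete fibres, and properness then makes each fibre $f^{-1}(p)$ compact and hence finite. Writing $e_q$ for the local multiplicity (ramification index) of $f$ at a point $q$, I define $N_p := \sum_{q \in f^{-1}(p)} e_q$, which is exactly the count with multiplicity in the statement.

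First I would fix $p_0 \in D_2$ and enumerate its fibre $f^{-1}(p_0) = \{q_1, \ldots, q_k\}$. Using the local normal form for holomorphic maps, near each $q_i$ there are holomorphic coordinates in which $f$ takes the shape $z \mapsto z^{e_i}$ with $e_i = e_{q_i}$. I would then pick pairwise disjoint coordinate neighbourhoods $V_i \ni q_i$, small enough that $f|_{V_i}$ is a branched cover of degree $e_i$ onto a common neighbourhood of $p_0$; in particular, for $p$ near $p_0$ the model map $z \mapsto z^{e_i}$ has exactly $e_i$ preimages in $V_i$ counted with multiplicity, so $V_i$ contributes exactly $e_i$ to $N_p$.

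The crucial step, and the one place properness is genuinely needed, is to produce a neighbourhood $W \ni p_0$ with $f^{-1}(W) \subset \bigcup_i V_i$, i.e.\ to rule out preimages drifting in from elsewhere in $D_1$. I would argue by contradiction: if no such $W$ existed, there would be a sequence $p_n \to p_0$ with preimages $q_n \in f^{-1}(p_n) \setminus \bigcup_i V_i$. The set $\{p_n\} \cup \{p_0\}$ is a compact subset of $D_2$, so by properness its preimage is compact and the $q_n$ admit a subsequential limit $q_\infty \in D_1$; continuity gives $f(q_\infty) = p_0$, whence $q_\infty \in f^{-1}(p_0) \subset \bigcup_i V_i$, contradicting that the $q_n$ lie in the closed complement of $\bigcup_i V_i$.

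With such a $W$ in hand, for every $p \in W$ each preimage lies in some $V_i$, and the local analysis shows the total multiplicity is $\sum_i e_i = N_{p_0}$, so $N_p = N_{p_0}$ throughout $W$. Hence $N$ is locally constant and therefore constant on the connected domain $D_2$. I expect the only real subtlety to be the compactness argument just described, since everything else is routine local theory; a minor bookkeeping point is that one of the domains may be a neighbourhood of $\infty$ in $\mathbb{P}^1$, but this is absorbed by working in the appropriate coordinate chart and does not affect the argument.
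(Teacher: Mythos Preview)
Your proof is correct and is the standard Riemann-surface argument: use the local normal form to count contributions near each fibre point, use properness (via a sequential compactness argument) to confine all nearby preimages to the chosen coordinate patches, and conclude that $N_p$ is locally constant on the connected set $D_2$.

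The paper takes a different, more direct route via the argument principle. Given any two points $p,q\in D_2$, it joins them by a smooth curve $\gamma$, uses properness to conclude $f^{-1}(\gamma)$ is compact, and covers it by a finite union $U$ of open discs compactly contained in $D_1$. Since $f(\partial U)$ then avoids $\gamma$, the winding number of the image of each boundary component is the same about $p$ as about $q$; summing and invoking the argument principle gives $N_p=N_q$ directly. Compared with your approach, the paper avoids the local normal form and the sequential extraction step, trading them for a winding-number computation; your version is perhaps closer to the textbook ``proper holomorphic map is a branched covering of constant degree'' statement, while the paper's is a slightly more hands-on but self-contained calculation. Both arguments use properness at essentially the same point, namely to guarantee that the relevant preimage set is compact.
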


\begin{proof}
Let $\gamma$ be a smooth curve in $D_2$ connecting two points $p$ and $q$ and let $U$ be a finite union of open discs compactly supported in $D_1$ which together cover the compact set $f^{-1}(\gamma)$. Since the image of any boundary component of $U$ cannot cross $\gamma$  the winding numbers of the image of any such boundary component with respect to $p$ and $q$ must be the same.  Since that winding number counts the number of preimages inside that component we get by adding up the winding numbers for the different boundary components that $N_p=N_q$.
\end{proof}

\begin{proof}[Proof of Theorem \ref{thm:holdiscs}]
 
First assume that $\Phi$ is $\omega_{FS}$-harmonic along a holomorphic disc $\{(g(\tau),\tau)\}.$ We want to show that $g(\tau)$ must be of the form stated in the theorem. 

Let $f(\tau):=\tau g(\tau).$ From Proposition \ref{propequiv} we have $\tilde{\Phi}$ is $\omega_{FS}$-harmonic along the punctured disc $\{(f(\tau),\tau): 0<|\tau|<1\}$ and since $\tilde{\Phi}$ is bounded away from $(0,0)$ it will be $\omega_{FS}$-harmonic along the unpunctured disc $\{(f(\tau),\tau): |\tau|<1\}$ unless that disc passes through $(0,0)$. If it does pass through $(0,0)$, the restriction of $\tilde{\Phi}$ to this disc has a logarithmic singularity of order at most one at $(0,0)$,  because $\tilde{\Phi}(z,\tau)$ is bounded from below by $\phi(z)+\ln|\tau|^2.$

Pick a $\tau_0,$ $0<|\tau_0|<1$ and let $z_0:=f(\tau_0)$ and $t_0:=H(z_0,\tau_0).$  The restriction of $$\psi_{t_0}(z)+(1-t_0)\ln |\tau|^2-\tilde{\Phi}(z,\tau)$$ to the disc is clearly subharmonic and less than or equal to zero and from Proposition \ref{prop:ham2} we see that it is equal to zero at $(z_0,\tau_0)$. By the maximum principle we thus get that 
\begin{equation} \label{eq:disceq}
\tilde{\Phi}(f(\tau),\tau)=\psi_{t_0}(f(\tau))+(1-t_0)\ln |\tau|^2
\end{equation} 
for all $\tau.$ 

We now consider the case when $f$ is constant. Putting $f(\tau)=z_0$ in (\ref{eq:disceq}) we get that 
\begin{equation} \label{eq:disceq2}
\tilde{\Phi}(z_0,\tau)=\psi_{t_0}(z_0)+(1-t_0)\ln |\tau|^2.
\end{equation} 
One possible case is that $z_0=0,$ and in fact we will see later that $\tilde{\Phi}$ always is $\omega_{FS}$-harmonic along this disc. If $z_0\neq 0,$ then we know that the LHS is $\omega_{FS}$-harmonic along the whole unpunctured disc, which forces $t_0=1$ (otherwise the RHS would be singular at the center). Letting $\tau \to 1$ in (\ref{eq:disceq2}) we get that $$\psi_1(z_0)=\lim_{\tau \to 1}\tilde{\Phi}(z_0,\tau)=\phi(z_0)$$ by continuity of $\tilde{\Phi}$ which shows that $z_0\in \Omega_1^c.$ 

We move on to the case when $f$ is nonconstant. By (\ref{eq:disceq}) we then get that $\psi_{t_0}$ is $\omega_{FS}$-harmonic in a neighbourhood of any point $f(\tau),$ $|\tau|>0,$ which implies that $f(\tau)\in \Omega_{t_0}.$ Note that this rules out the possibility of $t_0=0,$ which in turn implies that $f(\tau)\neq 0$ whenever $\tau \neq 0$ as this would make the RHS in (\ref{eq:disceq}) be $-\infty$. If $f(0)\neq 0$ then $\tilde{\Phi}$ is $\omega_{FS}$-harmonic along the entire disc and the same argument as above shows that $f(0)\in \Omega_{t_0}$ while $f(0)=0$ of course also implies that $f(0)\in \Omega_{t_0}$. We thus have that $f$ maps $\mathbb{D}$ to $\Omega_t.$ 

We now claim that $f$ is proper. Because if we choose a sequence $\tau_i$ such that $|\tau_i|\to 1$ then by continuity of $\tilde{\Phi}$ and (\ref{eq:disceq}) we get that 
\begin{equation} \label{eq:properness}
\lim_{i\to \infty} (\psi_{t_0}(f(\tau_i))-\phi(f(\tau_i))=0.
\end{equation} 
Since $\Omega_{t_0}$ is exhausted by the compact sets $\{z: \psi_{t_0}(z)\leq \phi(z)-1/n\}$ by (\ref{eq:properness}) $f(\tau_i)$ escapes to infinity in $\Omega_{t_0}.$ This shows that $f$ is proper. 

We now want to calculate $N_0:=\#\{f^{-1}(0)\}$. Because $f(\tau)\neq 0$ whenever $\tau \neq 0$,  $N_0$ equals multiplicity of the zero at zero. If $f(0)\neq 0$ we have $N_0=0$, which by Lemma \ref{lem:proper} is impossible, so we conclude that $f(0)=0.$ From (\ref{eq:disceq}) we get that $$\psi_{t_0}(f(\tau))+(1-t_0)\ln |\tau|^2=\tilde{\Phi}(f(\tau),\tau)\geq \phi(f(\tau))+\ln |\tau|^2.$$ If $m$ is the multiplicity of the zero of $f$ at zero we get that the LHS has Lelong number $1+(m-1)t_0$ while the RHS has Lelong number one, which shows that $N_0=m=1.$ This via Lemma \ref{lem:proper} implies that $f$ is a bijection between $\mathbb{D}$ and $\Omega_{t_0}.$

Since $\tau g(\tau)=f(\tau)$ this concludes the proof of the first direction of the Theorem.

We now prove that if $g(\tau)$ is of the form specified in the Theorem then $\Phi$ is $\omega_{FS}$-harmonic along the graph of $g.$ As before we let $f(\tau):=\tau g(\tau)$ and it is clearly enough to show that $\tilde{\Phi}$ is $\omega_{FS}$-harmonic along the punctured disc $\{(f(\tau),\tau): 0<|\tau|<1\}.$

The first case was $g\equiv 0$ and thus $f\equiv 0.$ From (\ref{legendre2}) we see that $\tilde{\Phi}(0,\tau)=\phi(0)+\ln |\tau|^2$ and thus $\tilde{\Phi}$ is indeed $\omega_{FS}$-harmonic along $\{(0,\tau): 0<|\tau|<1\}$. The second case corresponded to $f(\tau)=z$ where $z\in \Omega_1^c.$ From (\ref{legendre2}) we get that $\tilde{\Phi}(z,\tau)\geq \psi_1(z)=\phi(z).$ On the other hand the opposite inequality always holds and hence $\tilde{\Phi}$ is constant along the disc $\{(z,\tau): |\tau|<1\}.$ The third case meant that $f$ was a bijection between $\mathbb{D}$ and some simply connected Hele-Shaw domain $\Omega_t$, and $f(0)=0$. From (\ref{legendre2}) we get that $$\tilde{\Phi}(f(\tau),\tau)\geq \psi_t(f(\tau))+(1-t)\ln|\tau|^2.$$ The LHS is $\omega_{FS}$-subharmonic with Lelong number one at $\tau=0$ while the RHS is $\omega_{FS}$-harmonic except at $\tau=0$ where it also has Lelong number one. Since $f(\tau)$ escapes to infinity in $\Omega_t$ as $|\tau| \to 1$ the RHS approaches the LHS as $\tau$ approaches the boundary. By the maximum principle we get the equality $$\tilde{\Phi}(f(\tau),\tau)= \psi_t(f(\tau))+(1-t)\ln|\tau|^2,$$ which shows that $\tilde{\Phi}$ is $\omega_{FS}$-harmonic along the punctured disc $\{(f(\tau),\tau): 0<|\tau|<1\}.$ This concludes the proof.
 
\end{proof}

\section{Restatement of Main Theorems}\label{sec:example}

We saw in Section \ref{subsection:multcon} that for some $\phi$ the Hele-Shaw domains $\Omega_t$ are multiply connected for all $t$ in some interval. Combining this with our main theorem yields the following. 

\begin{theorem} \label{thm:partialsmooth}
Let $\phi$ be a K\"ahler potential on $\mathbb P^1$ whose Hele-Shaw flow satisfies the conclusion of Proposition \ref{prop:notsc}, i.e. there are two times $0<t_1<t_2<1$ such that for any $t\in (t_1,t_2)$ $\Omega_t$ is multiply connected.  Let $\Phi$ be the solution to the HMAE on $\mathbb{P}^1\times \overline{\mathbb{D}}$ with $\phi(z,\tau):=\phi(\rho(\tau)z)$ as boundary data. Then for such $\Phi$ there exists an open set $U$ in $\mathbb{P}^1\times \overline{\mathbb{D}}$ with nonempty intersection with $\mathbb{P}^1\times \partial \mathbb{D}$ which does not intersect any harmonic disc of $\Phi$.
\end{theorem}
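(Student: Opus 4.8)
The goal is to exhibit an open set $U$ meeting $\mathbb{P}^1 \times \partial\mathbb{D}$ that is disjoint from every harmonic disc, using the complete classification of harmonic discs from Theorem \ref{thm:holdiscs} together with the failure of simple connectivity provided by Proposition \ref{prop:notsc}. The key observation is that the classification severely restricts which points of $\mathbb{P}^1 \times \overline{\mathbb{D}}$ can be covered by harmonic discs: by Theorem \ref{thm:holdiscs} the only harmonic discs are (1) the fixed disc $\{(0,\tau)\}$, (2) discs of the form $\{(\tau^{-1}z, \tau)\}$ with $z \in \Omega_1^c$, and (3) discs coming from Riemann mappings of \emph{simply connected} Hele-Shaw domains $\Omega_t$. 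Crucially, type (3) discs only exist for those times $t$ at which $\Omega_t$ is simply connected, so the multiply connected times $t \in (t_1,t_2)$ contribute no harmonic discs of type (3) whatsoever.

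The plan is to locate a point of $\mathbb{P}^1 \times \partial\mathbb{D}$ that cannot lie on any of the three families, and then pass to an open neighborhood by a continuity/openness argument. First I would use Proposition \ref{prop:Hu_t}, which identifies $H(z,1)+1 = \sup\{t : z \notin \Omega_t\}$, to translate the statement that a point lies on a harmonic disc into a statement about the value of $H$. By Theorem \ref{thm:holdiscs}, along any harmonic disc the function $H$ is constant and takes the value $-1$, $0$, or $t-1$ (the latter only for simply connected $\Omega_t$). Therefore a boundary point $(z,1)$ lies on a harmonic disc only if $H(z,1)+1 \in \{0,1\}$ or $H(z,1)+1 = t$ for some $t$ at which $\Omega_t$ is simply connected. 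I would then locate a point $z_*$ such that $H(z_*,1)+1 = t_*$ for some $t_* \in (t_1,t_2)$: concretely, using the points $p$ or $q$ produced in the proof of Proposition \ref{prop:notsc}, which lie outside $\Omega_{t_2}$ but are engulfed at some later time, one gets a point whose ``entry time'' $\sup\{t : z \notin \Omega_t\}$ falls inside the multiply connected interval $(t_1,t_2)$. At such a $z_*$ the value $H(z_*,1)$ corresponds to a time at which $\Omega_{t_*}$ is not simply connected, so the point $(z_*,1)$ cannot lie on any harmonic disc of type (1), (2), or (3).

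To upgrade from the single boundary point to an open set $U$, I would invoke continuity of $H$ (stated in the definition of the Hamiltonian: $\tilde{\Phi}$ is $C^{1,1}$ on $\mathbb{P}^1 \times \overline{\mathbb{D}}^\times$ so $H$ is continuous). The set of ``forbidden'' values of $H(z,1)+1$ — namely the open interval $(t_1,t_2)$ minus the discrete or measure-zero set of times where $\Omega_t$ happens to be simply connected — is an open condition, and since $H$ takes a forbidden value at $z_*$ it does so on an open neighborhood. Pulling back through the relation between $\Phi$ and $\tilde\Phi$ in Proposition \ref{propequiv} and intersecting with a neighborhood of $\tau = 1$, one produces an open $U \subset \mathbb{P}^1 \times \overline{\mathbb{D}}$ meeting $\mathbb{P}^1 \times \partial\mathbb{D}$ on which every point has $H$-value incompatible with lying on a harmonic disc.

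The main obstacle I anticipate is the openness step: I must ensure that throughout a whole interval of times near $t_*$ the domain $\Omega_t$ fails to be simply connected, so that no type-(3) disc can sneak in at a nearby time, and that no type-(1) or type-(2) disc intersects $U$. The first concern is handled precisely by Proposition \ref{prop:notsc}, which guarantees multiple connectivity on the \emph{entire} open interval $(t_1,t_2)$ rather than at an isolated time; this is exactly why that proposition is phrased with an interval. The second concern is handled by noting that type-(1) discs all sit over $z=0$ (with entry time $1$) and type-(2) discs over $\Omega_1^c$ (entry time $1$), both disjoint from the interval $(t_1,t_2) \subset (0,1)$, so one simply chooses $U$ to avoid these; since $t_* < t_2 < 1$, a neighborhood of $(z_*,1)$ with $H$-values bounded away from $0$ automatically excludes them. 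The remaining work is bookkeeping: verifying that the neighborhood in the $\tau$-variable can be taken to reach $\partial\mathbb{D}$, which follows from the continuity of $H$ up to the boundary $|\tau|=1$.
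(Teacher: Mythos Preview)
Your approach is essentially the same as the paper's: both define $U$ as the preimage under the Hamiltonian $H$ of the forbidden interval $(t_1-1,t_2-1)$, rely on Theorem~\ref{thm:holdiscs} to exclude all three types of harmonic discs, and use continuity of $H$ for openness and for non-empty boundary intersection. The paper does this in one stroke by setting
\[
U:=\{(z,\tau): t_1-1<H(\tau z,\tau)<t_2-1,\ |\tau|>0\},
\]
and showing $U\cap(\mathbb P^1\times\partial\mathbb D)\neq\emptyset$ via the intermediate value theorem: $H(z,1)$ attains both $-1$ and $0$, hence every value in between.

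There is, however, a concrete error in the step where you produce the boundary point $z_*$. The points $p,q$ from the proof of Proposition~\ref{prop:notsc} lie \emph{outside} $\Omega_{t_2}$, so by Proposition~\ref{prop:Hu_t} their entry times satisfy $H(p,1)+1=\sup\{t:p\notin\Omega_t\}\ge t_2$, not a value in $(t_1,t_2)$. Thus $p$ and $q$ do not furnish the $z_*$ you need. The fix is exactly the intermediate value argument the paper uses: since $z\mapsto H(z,1)$ is continuous on $\mathbb P^1$, equals $-1$ at $z=0$ (as $0\in\Omega_t$ for all $t>0$) and equals $0$ on $\Omega_1^c$, it attains every value in $(t_1-1,t_2-1)$. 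With this correction your argument goes through and coincides with the paper's.
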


\begin{proof}
By Theorem \ref{thm:holdiscs} no harmonic disc of $\Phi$ can intersect the open set $$U:=\{(z,\tau): t_1-1<H(\tau z,\tau)<t_2-1, |\tau|>0\}.$$ Since $H(z,1)$ clearly attains both values $-1$ and $0$ it follows from continuity that $U\cap(\mathbb{P}^1\times \partial \mathbb{D})$ is nonempty.  
\end{proof}

\begin{theorem}\label{thm:perturb}
Let $\phi$ be as in Theorem \ref{thm:partialsmooth}. Then there exist a nonempty open set $U'$ in $\mathbb{P}^1\times \mathbb{D}$ and an $\epsilon>0$ such that if $\phi'(z,\tau)$ is any smooth boundary data with $$\|\phi'-\phi\|_{C^2(\mathbb{P}^1\times \partial \mathbb{D})}<\epsilon$$ and $\Phi'$ is the associated solution to the HMAE, then no harmonic disc of $\Phi'$ can pass through $U'.$
\end{theorem}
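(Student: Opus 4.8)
The plan is to argue by contradiction, exploiting that the set $U$ produced in Theorem~\ref{thm:partialsmooth} is open and meets the interior $\mathbb{P}^1\times\mathbb{D}$, and to extract from any putative family of bad harmonic discs a limiting harmonic disc of the \emph{unperturbed} $\Phi$ that still meets $U$. First I would fix, once and for all, a nonempty open set $U'$ with $\overline{U'}\subset U\cap(\mathbb{P}^1\times\mathbb{D})$ whose closure avoids $\{\infty\}\times\overline{\mathbb{D}}$; this is possible since $U$ is open and, by openness, contains points $(z,\tau)$ with $|\tau|<1$ and $z\in\mathbb{C}$. Suppose the assertion failed for this $U'$. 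Then there would be a sequence $\phi'_j\to\phi$ in $C^2(\mathbb{P}^1\times\partial\mathbb{D})$ together with harmonic discs $g'_j$ of the associated solutions $\Phi'_j$ whose graphs pass through points $p_j=(z_j,\tau_j)\in U'$. After passing to a subsequence, $p_j\to p_\infty=(z_\infty,\tau_\infty)\in\overline{U'}\subset U$ with $\tau_\infty\in\mathbb{D}$ and $z_\infty\in\mathbb{C}$. The goal is to produce a harmonic disc of $\Phi$ through $p_\infty$, contradicting Theorem~\ref{thm:partialsmooth}.

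The first ingredient is convergence of the solutions. The comparison principle applied to the defining envelopes gives $\|\Phi'_j-\Phi\|_{C^0(\mathbb{P}^1\times\overline{\mathbb{D}})}\le\|\phi'_j-\phi\|_{C^0(\mathbb{P}^1\times\partial\mathbb{D})}\to 0$. Moreover, since $(\mathbb{P}^1,\omega_{FS})$ has nonnegative bisectional curvature, B\l ocki's estimate \cite[Thm.~1.4]{Blocki} provides a $C^{1,1}$ bound on each $\Phi'_j$ depending only on $\|\phi'_j\|_{C^2(\mathbb{P}^1\times\partial\mathbb{D})}$, and hence uniform along the sequence. Combining this uniform $C^{1,1}$ bound with the $C^0$ convergence shows $\Phi'_j\to\Phi$ in $C^{1,\alpha}_{loc}$ for every $\alpha<1$; this is precisely the role played by the $C^2$ boundary norm appearing in the statement.

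The central step, and what I expect to be the principal obstacle, is compactness of the discs $g'_j$. Writing $\iota_j(\tau)=(g'_j(\tau),\tau)$ and $h_j(\tau)=\Phi'_j(g'_j(\tau),\tau)+\ln(1+|g'_j(\tau)|^2)$, one has $\iota_j^*(\pi^*\omega_{FS}+dd^c\Phi'_j)=dd^c h_j$ away from the poles of $g'_j$, so the harmonic disc condition is exactly that $h_j$ be harmonic. Since $\ln(1+|g'_j|^2)\ge 0$ and $\Phi'_j$ is uniformly bounded below (a constant is a competitor in the envelope defining $\Phi'_j$), I obtain a one-sided bound $h_j\ge -C$ with $C$ independent of $j$, together with the uniform upper bound $h_j(\tau_j)=\Phi'_j(z_j,\tau_j)+\ln(1+|z_j|^2)\le C'$ coming from $z_j$ being bounded. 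Using this lower bound, the value bound at $\tau_j$, and a normal-families argument based on the superharmonicity of $h_j$, I would control the discs and extract via Montel's theorem a subsequence of $g'_j$ converging locally uniformly to a holomorphic map $g_\infty\colon\mathbb{D}\to\mathbb{P}^1$ with $g_\infty(\tau_\infty)=z_\infty$. The delicate point is to rule out poles of $g'_j$ escaping to or concentrating near $\tau_\infty$ and to guarantee the limit is nondegenerate; here the essential structural facts are the uniform lower bound on $h_j$ and that $\ln(1+|g'_j|^2)$ carries only nonpositive mass at the poles (its $dd^c$ equals the smooth form $(g'_j)^*\omega_{FS}$ minus a sum of negative point masses).

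Finally I would pass the harmonicity to the limit. Using $\Phi'_j\to\Phi$ uniformly together with $g'_j\to g_\infty$ locally uniformly, one gets $h_j\to h_\infty(\tau)=\Phi(g_\infty(\tau),\tau)+\ln(1+|g_\infty(\tau)|^2)$ locally uniformly, so $h_\infty$ is harmonic as a locally uniform limit of harmonic functions. Thus $\pi^*\omega_{FS}+dd^c\Phi$ vanishes along the graph of $g_\infty$, i.e.\ $g_\infty$ is a harmonic disc of the unperturbed $\Phi$, and its graph passes through $p_\infty\in U$. This contradicts Theorem~\ref{thm:partialsmooth}, and the contradiction furnishes the required $\epsilon$ for the fixed set $U'$, completing the proof.
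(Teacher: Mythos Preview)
Your overall architecture---contradiction, $C^0$ convergence of the envelopes, extraction of a limiting harmonic disc that would violate Theorem~\ref{thm:partialsmooth}---matches the paper's. The gap is precisely where you flag it as ``delicate'': the compactness of the sequence $g'_j$. Your proposed mechanism (Harnack for the superharmonic functions $h_j$, using the uniform lower bound and a one-point upper bound) only yields an upper bound on $h_j$ on compacta if $h_j$ is \emph{harmonic}, i.e.\ if $g'_j$ has no poles. When $g'_j$ has a pole, $h_j$ is genuinely superharmonic with a logarithmic $+\infty$ singularity, and a lower bound together with a value at one point gives no control whatsoever on how large $h_j$ (hence $|g'_j|$) can be elsewhere. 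Since harmonic discs of the unperturbed $\Phi$ of type~(2) in Theorem~\ref{thm:holdiscs} already have poles, there is no reason to expect the perturbed discs to be pole-free, and your sketch provides no device to bound the number or location of such poles.

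The paper closes exactly this gap, but by a completely different route that you do not mention: rather than bounding $g_k$ away from $\infty$, it bounds $g_k$ away from $0$. Near $z=0$ the unperturbed $\Phi$ is explicitly regular (the Hele-Shaw flow begins as concentric discs, so the constant discs $z=\text{const}$ are harmonic), and one builds an auxiliary \emph{regular} solution $\Psi$ agreeing with $\Phi$ there. Donaldson's Openness Theorem then guarantees that the perturbed auxiliary solutions $\Psi_k$ remain regular, and a gluing argument shows $\Phi_k=\Psi_k+h(\tau z)$ on a fixed neighbourhood $U_k\supset D_r\times\mathbb{D}$. Regularity of $\Phi_k$ on $U_k$ forces the foliation there to be by discs close to the constant ones; since the graph of $g_k$ meets $U'$ (chosen disjoint from $D_{r'/2}\times\mathbb{D}$), it cannot be one of these and hence cannot enter $U_k$ at all. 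This gives $|g_k|\ge r$, so $1/g_k$ is bounded and Montel applies in the chart around $\infty$. The use of Donaldson's Openness Theorem and the explicit local structure of $\Phi$ near the origin is the essential idea missing from your proposal.
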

   
\section{Proof of Theorem \ref{thm:perturb}}

Recall from the Introduction that a solution $\Phi$ to the HMAE is called a \emph{regular solution} if it is smooth and $\omega + dd^c\Phi(\cdot,\tau)$ is a K\"ahler form for all $\tau\in \overline{\mathbb D}$. To prove Theorem \ref{thm:perturb} we will need Donaldson's Openness Theorem:

\begin{theorem}[Donaldson {\cite[Theorem 1]{Donaldson}}]
  The set of smooth boundary data $\phi: X\times \partial \mathbb{D} \to \mathbb{R}$ for which the HMAE has a regular solution is open in the $C^2$ topology. 
\end{theorem}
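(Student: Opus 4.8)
The plan is to prove openness via the deformation theory of the foliation by holomorphic discs that a regular solution induces. Suppose $\phi$ is boundary data admitting a regular solution $\Phi$. As recalled in the introduction, $\Phi$ then yields a foliation $\mathcal F$ of $X\times \overline{\mathbb D}$ whose leaves are holomorphic discs, i.e.\ graphs $\{(g(\tau),\tau):\tau\in\overline{\mathbb D}\}$ of holomorphic maps $g\colon \overline{\mathbb D}\to X$. Along $\partial\mathbb D$ each leaf meets $X\times\partial\mathbb D$, and the harmonicity of $\Phi$ together with the boundary condition $\Phi=\phi$ translates into a first-order, totally real boundary condition on $g$: the boundary circle together with its outward normal derivative is constrained to lie in the totally real submanifold of $TX$ (over $X\times\partial\mathbb D$) cut out by $\omega+dd^c\phi(\cdot,\tau)$. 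Regular solutions correspond exactly to such disc foliations that fill out $X\times\overline{\mathbb D}$, so it suffices to show that this foliation is stable under $C^2$-small perturbations of $\phi$.

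First I would set up the single-disc problem as an elliptic boundary value problem. The linearization of the $\overline\partial$-equation for $g$, coupled with the linearized totally real boundary condition, is a Riemann--Hilbert operator on the disc and is therefore Fredholm (by the standard theory for totally real boundary conditions). Its index equals $\dim_{\mathbb R}X = 2n$, matching the expected dimension of the moduli space of leaves, and --- crucially --- the K\"ahlerness of the fibrewise forms $\omega+dd^c\phi(\cdot,\tau)$ forces the boundary condition to carry the right Maslov data, so that the operator is surjective (its cokernel vanishes). Establishing this unobstructedness is the main analytic input: one uses positivity, either via a maximum-principle/integration-by-parts argument showing the formal adjoint has no kernel, or via a direct computation of the Maslov index of the totally real boundary bundle.

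Next I would deform. The totally real boundary submanifold depends smoothly on $\phi$, so a $C^2$-small change $\phi'$ of the data produces a $C^1$-small change of the boundary condition of the disc problem. By unobstructedness and the implicit function theorem applied to the Fredholm setup, each leaf of $\mathcal F$ deforms to a unique nearby holomorphic disc solving the $\phi'$-boundary problem, and the deformed discs form a smooth $2n$-parameter family depending smoothly on $\phi'$. Being a foliation is an open condition --- it amounts to the evaluation map from the total space of the family to $X\times\overline{\mathbb D}$ being a diffeomorphism, which is stable under $C^1$-small perturbation --- so for $\phi'$ sufficiently close the deformed discs again foliate $X\times\overline{\mathbb D}$. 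Finally I would reconstruct $\Phi'$ by declaring it to be $\pi^*\omega$-harmonic (affine in $\ln|\tau|^2$) along each new leaf with the prescribed boundary values; smoothness of the foliation gives smoothness of $\Phi'$, positivity in the leafwise-transverse directions gives K\"ahler fibres, and degeneracy along the leaves gives $(\pi^*\omega+dd^c\Phi')^{n+1}=0$, so $\Phi'$ is a regular solution and $\phi'$ lies in the same set.

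The step I expect to be the main obstacle is the unobstructedness of the linearized disc operator, i.e.\ showing the cokernel vanishes so that the implicit function theorem applies and the deformed discs remain embedded and mutually disjoint. This is exactly where the positivity built into the hypothesis of a \emph{regular} (K\"ahler-fibred) solution must be used, and it is also what guarantees that the perturbed family reassembles into a genuine foliation rather than merely an immersed family that might develop crossings.
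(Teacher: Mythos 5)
This statement is not proved in the paper at all---it is quoted verbatim from Donaldson \cite[Theorem 1]{Donaldson}---so the only meaningful comparison is with Donaldson's original argument, and your sketch follows essentially that same route: encode the regular solution as a family of holomorphic discs with a totally real, $\tau$-dependent boundary condition, show the linearized Riemann--Hilbert operator is Fredholm of index $2n$ and unobstructed using the positivity of the fibrewise K\"ahler forms, and deform the foliation by the implicit function theorem. The one point worth flagging is that in Donaldson's proof the positivity is used to show that all \emph{partial} indices of the boundary value problem vanish, which is strictly stronger than the total Maslov-index count you invoke and is exactly what guarantees both the surjectivity you identify as the main obstacle and the stability of the evaluation map as a diffeomorphism.
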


\begin{proof}[Proof of Theorem \ref{thm:perturb}]
Let $U$ and $\Phi$ be as above, and $U'$ be an nonempty open set which is relatively compact in $U\cap ((\mathbb{P}^1\setminus \{0\})\times \mathbb{D}^{\times})$. We argue by contradiction, so assume that there is a sequence of solutions $\Phi_k$ with boundary values $\phi_k$ such that $$\|\phi_k-\phi(z,\tau)\|_{C^2(\mathbb{P}^1\times \partial \mathbb{D})}<1/k$$ and such that for each $k$ the graph of some holomorphic $g_k\colon \mathbb D\to \mathbb P^1$ is a harmonic disc of $\Phi_k$ which pass through $U'.$  

Our first claim is that there exists an $r>0$ such that for $k$ large $$\inf\{|g_k(\tau)|: \tau \in \mathbb{D}\}\geq r.$$ For simplicity we will assume that $\omega_{\phi}=\delta\omega_{FS}$ in $D_{r'}:=r'\mathbb{D}$ for some $\delta,r'>0,$ as was the case for the $\phi$ used to prove Proposition \ref{prop:notsc}. Then the function $h(z):=\phi(z)+(1-\delta)\ln(1+|z|^2)$ is harmonic in $D_{r'}.$   By symmetry, it is not hard to see that the Hele-Shaw domains (defined by our potential $\phi$) are initially just concentric discs centered at the origin, at least up to the point where $\Omega_t=D_{r'}.$ From Theorem \ref{thm:holdiscs} we see that this implies that for any fixed $z\in D_{r'},$ the set $\{(z,\tau): \tau\in \mathbb{D}\}$ is a harmonic disc associated to $\Phi$.  Thus $\Phi(z,\tau)$ is harmonic in $\tau$ for these fixed $z$.

 This then implies that $$\Phi(z,\tau)=-(1-\delta)\ln(1+|z|^2)+h(\tau z)\text{ on }  D_{r'}\times \overline{\mathbb{D}}.$$

Pick a smooth K\"ahler potential $u$ on $\mathbb{P}^1$ which is rotation invariant, and such that on $D_{r'}$ we have that $$u\leq -(1-\delta)\ln(1+|z|^2)$$ with equality on $D_{r'/2}$ but strict inequality on $\partial D_{r'}.$ Clearly $\Psi(z,\tau):=u(z)$ is a regular solution to the HMAE and we note that 
\begin{align}\label{eq:perturb1}
\Psi(z,\tau)+h(\tau z)=\Phi(z,\tau)&\text{ on } D_{r'/2}\times \overline{\mathbb{D}}, \\
\Psi(z,\tau)+h(\tau z)\leq \Phi(z,\tau)& \text{ on } D_{r'}\times \overline{\mathbb{D}},\nonumber \\
\Psi(z,\tau)+h(\tau z)<\Phi(z,\tau)& \text{ on }\partial D_{r'}\times \overline{\mathbb{D}}. \nonumber
\end{align}

\noindent (Even if one loses the assumption that $\omega_{\phi} = \delta\omega_{FS}$ near $0$ then arguing as in \cite{RossNystromHeleShaw} one can still find an $r'>0$ and harmonic $h$ and regular solution $\Psi$ such that the three statements of \eqref{eq:perturb1} hold).

Let $f_k(z,\tau):=\phi_k(z,\tau)-\phi(z,\tau)$  and let $\Psi_k$ denote the solution to the HMAE with boundary data $\psi_k(z,\tau):=u(z)+f_k(z,\tau).$ Since by assumption $$\|f_k\|_{C^2(\mathbb{P}^1\times \partial \mathbb{D})}<1/k$$ it follows from Donaldson's Openness Theorem that for large $k$ the function $\Psi_k$ is regular, and that the harmonic discs of $\Psi_k$ converge to those of $\Psi.$ In particular there exists an $r>0$ such that for any large enough $k$ there is a neighbourhood $U_k$ of $D_r\times \mathbb{D}$ that is relatively compact in $D_{r'/2}\times \mathbb{D}$ which is foliated by harmonic discs of $\Psi_k.$ We also have that for large $k$ 
\begin{align*}
\Psi_k(z,\tau)+h(\tau z)=\Phi_k(z,\tau)&\text{ on }  D_{r'/2}\times \partial\mathbb{D}, \\
\Psi_k(z,\tau)+h(\tau z)\leq \Phi_k(z,\tau)&\text{ on }  D_{r'}\times \partial\mathbb{D}, \\
\Psi_k(z,\tau)+h(\tau z)<\Phi_k(z,\tau)& \text{ on }\partial D_{r}\times \overline{\mathbb{D}}, 
\end{align*} 
and therefore $$\Psi_k(z,\tau)+h(\tau z)\leq \Phi_k(z,\tau)\text{ on } D_{r'}\times \overline{\mathbb{D}}.$$ On $U_k$ though we must have $$\Psi_k(z,\tau)+h(\tau z)\geq  \Phi_k(z,\tau)$$ since along any harmonic disc of $\Psi_k$ with boundary in $D_{r'/2}\times \partial \mathbb{D}$ the LHS is $\pi^*\omega_{FS}$-harmonic, the RHS is $\pi^*\omega_{FS}$-subharmonic, and they agree on the boundary. In conclusion $$\Psi_k(z,\tau)+h(\tau z)=\Phi_k(z,\tau)$$ on $U_k$ which shows $\Phi_k$ is regular on $U_k$ and that $U_k$ is foliated by harmonic discs of $\Phi_k.$ 

By shrinking $U'$ if necessary we may assume $U'\cap (D_{r'/2}\times \mathbb D)  = \emptyset$, and so
 $$U'\cap U_k=\emptyset \text{ for all }k.$$ Since the graph of $g_k$ by assumption passes through $U'$ it is not one of the harmonic discs that foliate $U_k$ and since $\Phi_k$ is regular this means that the graph of $g_k$ cannot intersect $U_k.$ Finally this implies that $|g_k(\tau)|\geq r$ for all $k$ as claimed.\medskip 

Thus $1/g_k$ is a bounded family of holomorphic functions on $\mathbb{D}$ so by Montel's theorem it is normal and thus replacing $g_k$ by a subsequence we may assume $g_k$ converges uniformly on compacts to a holomorphic function $g.$ We claim that the graph of $g$ then is a harmonic disc of $\Phi.$

To see this let $$h_k(\tau):=\Phi_k(g_k(\tau),\tau)+\ln(1+|1/g_k(\tau)|^2)$$ and $$h(\tau):=\Phi(g(\tau),\tau)+\ln(1+|1/g(\tau)|^2).$$ By assumption $h_k$ is harmonic and we want to show that $h$ is harmonic (which says precisely that the graph of $g$ is a harmonic disc of $\Phi.$).  Now 
\begin{eqnarray} \label{eq:harmdiff}
|h_k(\tau)-h(\tau)|\leq |\Phi_k(g_k(\tau),\tau)-\Phi(g_k(\tau),\tau)|+|\Phi(g_k(\tau),\tau)-\Phi(g(\tau),\tau)|.
\end{eqnarray}
The first term of the RHS tends uniformly to zero since (as is easily seen) 
\begin{eqnarray*}
\|\Phi_k-\Phi\|_{C^0(\mathbb{P}^1\times \overline{\mathbb{D}})}\leq \|\phi_k-\phi\|_{C^0(\mathbb{P}^1\times \partial \mathbb{D})}\leq \|\phi_k-\phi\|_{C^2(\mathbb{P}^1\times \partial \mathbb{D})}\leq 1/k.
\end{eqnarray*} 
The second term of the RHS of (\ref{eq:harmdiff}) tends to zero uniformly on compacts since $g_k$ tends to $g$ uniformly on compacts and $\Phi$ is $C^1.$ Thus $h_k$ tends to $h$ uniformly on compacts which implies that $h$ is harmonic, and so the graph of $g$ is a harmonic disc of $\Phi.$ But since $U'$ was compactly supported in $U\cap ((\mathbb{P}^1\setminus \{0\})\times \mathbb{D}^{\times})$ the graph of $g$ must intersect $U$ which by Theorem \ref{thm:partialsmooth} is a contradiction. This concludes the proof. 
\end{proof}

\section{Partial and Almost Smoothness}\label{sec:ChenTian}

\begin{definition}\label{def:regularlocus}
Let $\Phi$ be upper semicontinuous on an open subset of $\mathbb P^1\times \overline{\mathbb D}$ with $\pi^*_{\mathbb P^1}\omega_{FS} + dd^c\Phi\ge 0$ and $(\pi^*_{\mathbb P^1}\omega_{FS} + dd^c\Phi)^2=0$.  The \emph{regular locus} $\mathcal R_{\Phi}$ is the set of all $(z,\tau)\in \mathbb P^1\times \overline{\mathbb D}$ near which $\Phi$ is smooth and $\pi_{\mathbb P^1}^*\omega_{FS} + dd^c\Phi(\cdot,\tau)$ is a K\"ahler form.
\end{definition}

In particular the above definition applies to the weak solution of the HMAE.  It is clear that $\mathcal R_{\Phi}$ is open, and inside it the kernel of $\pi^*_{\mathbb P^1}\omega_{FS} + dd^c \Phi$ defines a complex one-dimensional integrable distribution $\mathcal D_{\Phi}$ in $R_{\Phi}$.  By restriction this gives a foliation on any open subset $V\subset \mathcal R_{\phi}$ which, when necessary, we refer to as the \emph{Monge-Amp\`ere foliation} in $V$.

The following definition is a slight adaptation of those from \cite[Sec 1.3]{ChenTian}.



\begin{definition}
We say the weak solution $\Phi$ to the HMAE is \emph{partially smooth}\footnote{Prof Tian has informed us that the definition of partially smooth in  \cite[Def 1.3.1]{ChenTian} contains a typo, and should read that ``$R_{\phi}$ is saturated in $X\times (\Sigma\setminus\partial\Sigma)$''.  This is potentially weaker than what is written in \cite[Def 1.3.1]{ChenTian}, and is what we use here.} if it holds that (1) $\mathcal R_{\phi}\cap (\mathbb P^1\times \mathbb D)$ is foliated by harmonic discs and (2) $\mathcal R_{\phi}$ is dense in $\mathbb P^1\times \partial\mathbb D$ and (3) the fibrewise volume form  $\omega_{\Phi(\cdot,\tau)}$ which is defined on $R_{\phi}$ extends to a continuous $(1,1)$-form on $\mathbb P^1\times \mathbb D$.
\end{definition}

The following should be compared with \cite[Thm. 1.3.2]{ChenTian} which says weak solutions are always partially smooth.

\begin{corollary}
There exist $\phi$ such that the weak solution $\Phi$ to the HMAE with boundary data $\phi(z,\tau):=\phi(\rho(\tau)z)$ are not partially smooth.  
\end{corollary}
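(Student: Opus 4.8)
The plan is to argue by contradiction, taking $\phi$ to be any K\"ahler potential satisfying the conclusion of Proposition \ref{prop:notsc}, so that Theorem \ref{thm:partialsmooth} applies to the associated solution $\Phi$ and produces an open set $U\subset \mathbb P^1\times \overline{\mathbb D}$ which meets $\mathbb P^1\times \partial \mathbb D$ and is disjoint from every harmonic disc of $\Phi$. I claim this single $U$ already obstructs partial smoothness. Before starting the contradiction I would record two elementary consequences of $U$ being open in $\mathbb P^1\times \overline{\mathbb D}$ and meeting $\mathbb P^1\times \partial \mathbb D$: first, the slice $U\cap(\mathbb P^1\times \partial \mathbb D)$ is a nonempty relatively open subset of $\mathbb P^1\times\partial\mathbb D$; and second, any relatively open neighbourhood of a boundary point of $U$ necessarily contains interior points $(z,\tau)$ with $|\tau|<1$.

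Now suppose for contradiction that $\Phi$ is partially smooth. By condition (2) of the definition, $\mathcal R_\Phi$ is dense in $\mathbb P^1\times \partial \mathbb D$, so it meets the nonempty relatively open set $U\cap(\mathbb P^1\times \partial\mathbb D)$, yielding a regular boundary point $p\in U\cap \mathcal R_\Phi$. Since both $\mathcal R_\Phi$ and $U$ are open, $U\cap \mathcal R_\Phi$ is an open neighbourhood of $p$, and as $p$ lies on $\mathbb P^1\times \partial \mathbb D$ this neighbourhood contains interior points by the second observation above; thus I can select $q\in U\cap \mathcal R_\Phi\cap(\mathbb P^1\times \mathbb D)$. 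Finally, condition (1) says that $\mathcal R_\Phi\cap(\mathbb P^1\times \mathbb D)$ is foliated by harmonic discs, so the leaf of the Monge-Amp\`ere foliation through $q$ is a harmonic disc of $\Phi$. This harmonic disc passes through $q\in U$, contradicting the defining property of $U$ supplied by Theorem \ref{thm:partialsmooth}. Hence no such partially smooth $\Phi$ exists for this $\phi$.

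The step I expect to demand the most care is the passage from the regular \emph{boundary} point furnished by the density hypothesis (2) to an interior regular point still lying in $U$, together with the identification of the leaf through that interior point with a genuine harmonic disc in the sense used in Theorem \ref{thm:holdiscs}; this is precisely what couples the local foliation structure of condition (1) to the global non-existence statement of Theorem \ref{thm:partialsmooth}. I would emphasise that conditions (1) and (2) are both used essentially whereas condition (3) plays no role, so what is actually being exhibited is the failure of (1) and (2) simultaneously. For the existence claim it suffices to invoke Proposition \ref{prop:notsc} to guarantee at least one admissible $\phi$, which is all the statement requires.
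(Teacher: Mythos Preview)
Your argument is correct and follows the same route as the paper's proof, which is a one-line appeal to Theorem \ref{thm:partialsmooth}: the open set $U$ meets the boundary but contains no harmonic disc, which is incompatible with conditions (1) and (2) of partial smoothness. You have simply unpacked this implication carefully, and your caveat about identifying the leaf with a genuine harmonic disc is already handled by the wording of condition (1), which states outright that $\mathcal R_\Phi\cap(\mathbb P^1\times\mathbb D)$ is foliated by harmonic discs.
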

\begin{proof}
This is immediate, for our example from Theorem \ref{thm:partialsmooth} gives boundary data for which the solution to the HMAE is not partially smooth, since no harmonic disc intersects the open set $U$ which has non-empty intersection with the boundary.
\end{proof}

\begin{remark}
The example in Theorem \ref{thm:perturb} is in apparent contradiction with  \cite[Thm 1.3.4]{ChenTian}, since the solutions $\Phi'$ associated to the perturbed boundary data $\phi'$ cannot be ``almost smooth'' (for by \cite[Prop 2.3.1]{ChenTian} for any almost smooth solution, the set of harmonic discs is dense in $\mathbb P^1\times \mathbb D$).
\end{remark}

\medskip
\small{
\noindent {\sc Julius Ross,  DPMMS , University of Cambridge, UK. j.ross@dpmms.cam.ac.uk}\medskip

\noindent{\sc David Witt Nystr\"om, DPMMS,  University of Cambridge, UK. \newline d.wittnystrom@dpmms.cam.ac.uk, danspolitik@gmail.com}}

\end{document}